\renewcommand{\ge}{\geqslant}
\renewcommand{\le}{\leqslant}
\newtheorem{theorem}{Theorem}
\newtheorem{corollary}[theorem]{Corollary}
\newtheorem{lemma}[theorem]{Lemma}
\newtheorem{proposition}[theorem]{Proposition}
\newtheorem{conjecture}[theorem]{Conjecture}
\newtheorem*{theorem*}{Theorem}
\newtheorem*{proposition*}{Proposition}
\theoremstyle{definition}
\theoremstyle{remark}
\newtheorem{remark}[theorem]{Remark}
\crefname{remark}{Remark}{Remarks}
\theoremstyle{definition}
\newtheorem{claim}{Claim}
\DeclareMathOperator{\aff}{aff}
\DeclareMathOperator{\ver}{vert}
\crefname{rmk}{Remark}{Remarks}
\crefname{problem}{Problem}{Problems}
\date{\today}
\title{Polytopes close to being simple}
\author{Guillermo Pineda-Villavicencio}
\address{Centre for Informatics and Applied Optimisation, Federation University Australia}
\email{\texttt{work@guillermo.com.au}}
\author{Julien Ugon}
\address{School of Information Technology, Deakin University}
\email{\texttt{julien.ugon@deakin.edu.au}}
\author{David Yost}
\address{Centre for Informatics and Applied Optimisation, Federation University Australia}
\email{\texttt{d.yost@federation.edu.au}}
\keywords{Reconstruction, simple polytope, $k$-skeleton}
\subjclass[2010]{Primary 52B05; Secondary 52B12}
\begin{document}
\begin{abstract}
It is known that polytopes with at most two nonsimple vertices are reconstructible from their graphs, and that $d$-polytopes with at most $d-2$ nonsimple vertices are reconstructible from their 2-skeletons. Here we close the gap between 2 and $d-2$, showing that certain polytopes with more than two nonsimple vertices are reconstructible from their graphs. In particular, we
prove that reconstructibility from graphs also holds for $d$-polytopes  with $d+k$ vertices and at most  $d-k+3$ nonsimple vertices, provided $k\ge 5$. For $k\le4$, the same conclusion holds under a slightly stronger assumption.

Another measure of deviation from simplicity is the {\it excess degree} of a polytope, defined as $\xi(P):=2f_1-df_0$, where $f_k$ denotes the number of $k$-dimensional faces of the polytope. Simple polytopes are  those with excess zero. We prove that polytopes with excess at most $d-1$ are reconstructible from their graphs, and this is best possible.
An interesting intermediate result is that $d$-polytopes with less than $2d$ vertices, and at most $d-1$ nonsimple vertices, are necessarily pyramids.
\end{abstract}

\maketitle

\section{Introduction and summary}

The $k$-dimensional {\it skeleton} of a polytope $P$ is the set of all its faces of dimension $\le k$. The 1-skeleton of $P$ is the {\it graph} $G(P)$ of $P$. Reconstructing  a polytope from its $k$-skeleton amounts to giving the combinatorial structure of the polytope (i.e the  lattice of its faces, ordered by inclusion)  solely by querying the $k$-skeleton. It however suffices to reconstruct the facets of $P$, since the combinatorial structure of  a polytope is determined by the vertex-facet incidence graph, where a facet is adjacent to a vertex if and only if, it contains the vertex \cite[Sec.~16.1.1]{GooORo04}. Throughout the paper, we let $d$ denote the dimension of $P$, $\deg$ denote the {\it degree} of a vertex, i.e. the number of edges incident to the vertex in the polytope $P$, and  $V(P)$ and $E(P)$ denote the vertex and edge set of a polytope $P$, respectively.

Every $d$-polytope is reconstructible from its $(d-2)$-skeleton \cite[Thm.~12.3.1]{Gru03}, and there are combinatorially inequivalent $d$-polytopes with the same $(d-3)$-skeleton: take, for instance, a bipyramid over a $(d-1)$-simplex and a pyramid over a bipyramid over a $(d-2)$-simplex.
For some classes of polytopes, the graph somewhat surprisingly determines the combinatorial structure of the polytope: polytopes with dimension at most three and simple polytopes \cite{BliMan87,Kal88}. A $d$-polytope is  called {\it simple} if every vertex is simple. A vertex is called {\it simple} if its degree is exactly  $d$; otherwise it is called {\it nonsimple}. Equivalently, a vertex is simple if it is contained in exactly $d$ facets, and nonsimple otherwise.

Define the {\it excess degree} $\xi(u)$ of a vertex $u$ in a $d$-polytope as $\deg u-d$. Then the {\it excess} $\xi$ of a $d$-polytope is defined as the sum of the excess of all its vertices; i.e.~ \[\xi(P):=\sum_{u\in \ver P} (\deg u-d).\]
Simple polytopes have excess zero.
Polytopes with small excess are a natural generalisation of simple polytopes. The excess degree is studied in detail in \cite{PinUgoYos16a}. For the early sections of  this paper, we will just need the following basic but surprisingly useful  result \cite[Lem. 2.4, 2.5 and 2.6(i)]{PinUgoYos16a}.

\begin{lemma}\label{lem:basic-excess} Let $P$ be a $d$-polytope, $F$ a facet, and let $v$ be a vertex in $F$.
\begin{enumerate}[(i)]
\item Suppose $v$ is a simple vertex in $F$, but is not simple in $P$. Then there is facet $J$ containing $v$ whose intersection with $F$ is not a ridge.
\item Let $J$ be any facet which is distinct from $F$, such that  $F \cap  J$ is not a ridge. Then every vertex in $F \cap  J$ is nonsimple in $P$.
\item Suppose $v$ is nonsimple in $P$, and adjacent to a simple vertex $w$ of $P$ in $P\setminus F$. Then $v$ must be adjacent to another vertex in $P\setminus F$, other than $w$.
\end{enumerate}
\end{lemma}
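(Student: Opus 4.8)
The plan is to analyse the combinatorics at a single vertex $v$, using the vertex figure $P/v$ together with two elementary facts: every ridge lies in exactly two facets, and every edge of a $d$-polytope lies in at least $d-1$ facets. Recall that $v$ is simple precisely when $P/v$ is a $(d-1)$-simplex, equivalently when $v$ lies in exactly $d$ facets, and that the faces of $P$ through $v$ correspond, with a shift of one in dimension, to the faces of $P/v$. All three parts are local statements at $v$, so this is the natural common framework.

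For (i), I would count the facets through $v$ that meet $F$ in a ridge. Since $v$ is simple in the $(d-1)$-polytope $F$, it lies in exactly $d-1$ facets of $F$, and each such facet is a ridge $F\cap J$ of $P$ determined by a unique second facet $J\ne F$. Distinct ridges yield distinct $J$'s, so exactly $d-1$ facets $J\ne F$ satisfy that $F\cap J$ is a ridge through $v$. Together with $F$ this accounts for $d$ facets through $v$; as $v$ is nonsimple in $P$ it lies in more than $d$ facets, so some remaining facet $J\ni v$ must meet $F$ in a face of dimension below $d-2$, that is, not in a ridge.

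For (ii) I would argue the contrapositive. If a vertex $v\in F\cap J$ were simple, then $P/v$ would be a $(d-1)$-simplex, and the two facets $F,J$ through $v$ would correspond to two facets of that simplex, which always intersect in a face of codimension two; translating back, $F\cap J$ would be a ridge of $P$. Since by hypothesis $F\cap J$ is not a ridge, every vertex of $F\cap J$ must be nonsimple.

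Part (iii) is where I expect the real difficulty, because the naive count of edges leaving $F$ is not by itself decisive: a vertex figure can be a pyramid without being a simplex, so a single escaping edge is not a priori contradictory. The approach is to suppose, for contradiction, that $vw$ is the only edge at $v$ leaving $F$. Edges at $v$ correspond to vertices of $P/v$, and those lying in $F$ correspond to vertices of the facet $\bar F$ of $P/v$ determined by $F$; so the hypothesis says $\bar F$ omits exactly one vertex of $P/v$, namely the vertex $\bar w$ coming from $vw$. Hence $P/v=\conv(\bar F\cup\{\bar w\})$ is a pyramid with apex $\bar w$, so $\bar w$ lies in every facet of $P/v$ except $\bar F$. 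Translating back, the edge $vw$ lies in every facet of $P$ through $v$ except $F$. Since $v$ is nonsimple it lies in at least $d+1$ facets, so $vw$ lies in at least $d$ of them; but $w$ is simple, so $P/w$ is a simplex and the vertex of $P/w$ corresponding to $vw$ lies in only $d-1$ of its $d$ facets, forcing $vw$ into exactly $d-1$ facets. The resulting contradiction $d\le d-1$ shows that a second edge at $v$ must leave $F$. The crux is recognising that the single-edge hypothesis forces the pyramidal structure of $P/v$, which is precisely what the simplicity of $w$ rules out.
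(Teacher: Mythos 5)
Your proof is correct in all three parts, but note that there is no in-paper argument to compare it against: \cref{lem:basic-excess} is imported without proof from \cite[Lem.~2.4, 2.5 and 2.6(i)]{PinUgoYos16a}, so your write-up has to stand on its own --- and it does. In (i), the bijection you rely on is sound: a ridge $R \subseteq F$ through $v$ determines a unique second facet $J \ne F$, and conversely $F \cap J = R$ because $F \cap J$ is a proper face of $F$ containing the facet $R$ of $F$; hence exactly $d-1$ facets $J \ne F$ through $v$ meet $F$ in a ridge, and the at least $d+1$ facets through the nonsimple vertex $v$ leave at least one facet over, whose intersection with $F$ contains $v$ and so cannot be a ridge without contradicting the count. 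In (ii), the lattice isomorphism between the faces of $P$ containing $v$ and the faces of the vertex figure preserves meets, so $F \cap J$ corresponds to the intersection of two facets of a $(d-1)$-simplex, which is always a face of codimension two; this gives the contrapositive cleanly. In (iii), you correctly identified the crux: the single-escaping-edge hypothesis forces the vertex figure at $v$ to be a pyramid with apex $\bar{w}$ (any facet of it other than $\bar{F}$ must contain a vertex off $\bar{F}$, and $\bar{w}$ is the only candidate), so every facet of $P$ through $v$ except $F$ contains the edge $vw$, giving at least $d$ facets through $vw$, while the simplicity of $w$ forces exactly $d-1$. The only step worth making explicit is the elementary fact you use silently when translating the hypothesis: an edge both of whose endpoints lie in the face $F$ lies in $F$ (immediate since $F$ is the intersection of $P$ with a supporting hyperplane), which is what makes ``edges at $v$ not leaving $F$'' the same as ``vertices of the vertex figure lying in $\bar{F}$''.
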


It was shown in \cite{NevPinUgo17}  that polytopes with  at most two nonsimple vertices are reconstructible from their graphs. Since it will be used several times, we state this explicitly here.

\begin{theorem}\label{thm:fourpointfive} \cite[Thms.~4.5 and 4.8]{NevPinUgo17} Every polytope with only one or two nonsimple vertices can be
reconstructed from its graph.
\end{theorem}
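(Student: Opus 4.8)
The plan is to reconstruct $P$ by identifying its facets from $G(P)$, which by the discussion in the introduction suffices to recover the whole face lattice. I would first extract the ambient data directly from the graph: every vertex of a $d$-polytope has degree at least $d$, and since a $d$-polytope with at most two nonsimple vertices and $d\ge 2$ has $f_0\ge d+1\ge 3$ vertices, it has at least one simple vertex; hence $d$ is the minimum vertex degree, and the nonsimple vertices are exactly those of degree greater than $d$. Thus the set $N$ of nonsimple vertices, with $|N|\le 2$, is known at the outset. I would then set up an induction on $d$: if $u$ is simple in $P$ and lies in a facet $F$, then the vertex figure of $u$ is a simplex and its facet corresponding to $F$ is again a simplex, so $u$ is simple in $F$. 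Consequently every facet of $P$ has its nonsimple vertices among $N$ and so also carries at most two nonsimple vertices, and the inductive hypothesis reconstructs each facet from its own graph. The remaining problem is to recognise which induced subgraphs of $G(P)$ are facets.

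For this recognition step I would use Kalai's framework for simple polytopes. Recall that for any acyclic orientation $\mathcal{O}$ of the graph one forms $f^{\mathcal{O}}:=\sum_{v}2^{\mathrm{indeg}_{\mathcal{O}}(v)}$, that for a \emph{simple} polytope this quantity is minimised exactly by the orientations induced by generic linear functionals (the ``good'' orientations), at which value it equals the total number of nonempty faces, and that under a good orientation a vertex set is a face if and only if it is an initial set inducing a connected regular subgraph. For our $P$ the identity $f^{\mathcal{O}}=f(P)$ fails, but only because of the vertices of $N$: a simple sink of in-degree $h$ is still the sink of exactly $2^h$ faces, whereas at a nonsimple sink this count changes, so the deviation $f^{\mathcal{O}}-f(P)$ is a correction term supported on $N$. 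Since $|N|\le 2$, I would show that minimising $f^{\mathcal{O}}$ after this bounded correction again singles out the generic orientations, and that Kalai's criterion, applied away from $N$, then recovers every face of $P$ disjoint from the nonsimple vertices, in particular every facet disjoint from $N$.

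The genuinely new work concerns facets that pass through a nonsimple vertex. Such a facet $F$ is not a regular subgraph of $G(P)$, so Kalai's ``initial regular subgraph'' test does not detect it, and this is where I expect the main obstacle to lie. The plan is to pin these facets down locally around $N$, and here \cref{lem:basic-excess} is the key lever: by part (ii), whenever two facets meet in something other than a ridge, every vertex of their intersection is nonsimple, so with at most two nonsimple vertices all non-ridge intersections involve at most two vertices; part (i) forces, for a vertex simple in a facet but not in $P$, the existence of such a low-dimensional intersection; and part (iii) constrains the edges at a nonsimple vertex leaving a given facet. Together these confine the nonsimple combinatorics to a bounded configuration around $N$, which I would resolve by a finite case analysis according to whether $N$ is a single vertex, or two vertices that are adjacent or not, and whether the two nonsimple vertices share a facet. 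In each case one checks that the candidate facets through $N$ — obtained as maximal initial sets whose induced subgraph is, by the inductive reconstruction, the graph of a $(d-1)$-polytope — are uniquely determined.

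The crux, and the step I expect to be hardest, is proving \emph{uniqueness}: that no two combinatorially distinct polytopes with at most two nonsimple vertices share the graph $G(P)$. Everything above yields a candidate list of facets; the real content is to show this list is forced. I would argue that any admissible facet through a nonsimple vertex must agree, on the simple part of the graph, with the structure already rigidly determined by Kalai's criterion, and that the scarcity of nonsimple vertices leaves no freedom in how the at most two exceptional vertices attach. The two-nonsimple-vertex case is strictly harder than the one-vertex case — which is presumably why the statement is assembled from two separate results — because two nonsimple vertices can interact, either lying in a common non-ridge intersection or being adjacent, and ruling out the resulting spurious reconstructions is exactly where the bounded case analysis must be carried through in full.
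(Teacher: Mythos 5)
First, a point of comparison: this paper does not prove \cref{thm:fourpointfive} at all --- it is imported verbatim from \cite{NevPinUgo17}. The proof you are implicitly competing with is the one in that reference, whose machinery this paper reproduces in Section 4 (Joswig's valid-frames theorem \cref{lem:Joswig-valid-frames}, good orientations in which a candidate subgraph is initial, feasible subgraphs and \cref{lem:feasible-subgraphs}, and the counting function $f^O_R = h^O_{d-1} + d\,h^O_d$ used in Claim 1 of the proof of \cref{thm:ExcessRec}, which is explicitly modelled on the proof of Theorem 4.8 of \cite{NevPinUgo17}). Your outline belongs to the same Kalai--Joswig lineage, but it diverges from that proof exactly where the real content lies, and at those points it has genuine gaps rather than arguments.

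The first gap is your minimisation step. You propose to keep Kalai's functional $f^{\mathcal{O}}=\sum_v 2^{\mathrm{indeg}_{\mathcal{O}}(v)}$ and to absorb the failure of the identity $f^{\mathcal{O}}=\#\{(F,v): v \text{ a sink of } F\}$ into a ``bounded correction supported on $N$.'' But the discrepancy at a nonsimple vertex is not a constant attached to that vertex: it is $2^{\mathrm{indeg}(v)}$ minus the number of faces all of whose edges at $v$ are incoming, a quantity that depends on the orientation and can have either sign. Consequently there is no a priori reason why minimisers of $f^{\mathcal{O}}$ should be good orientations, nor why good orientations should be minimisers; the inequality $f^{\mathcal{O}}\ge f(P)$ that drives Kalai's argument simply fails. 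This is precisely why the actual proof abandons the all-faces count: it counts only \emph{simple} sinks of \emph{facets}, and only over a restricted class of acyclic orientations in which the nonsimple vertices are confined to an initial feasible subgraph, so that every facet not represented by the candidate subgraph is forced to have a simple sink; it also needs \cref{lem:feasible-subgraphs}, whose hypothesis is $(d-1)$-connectivity (not the mere regularity and connectivity in Kalai's criterion), to kill spurious initial subgraphs --- a problem your ``Kalai's criterion applied away from $N$'' step ignores even for facets disjoint from $N$. The second gap is the crux you yourself flag: facets through nonsimple vertices are handled only by an announced ``finite case analysis'' whose uniqueness claim (``the list is forced'') is asserted, never argued. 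Moreover your proposed membership test --- that a maximal initial set induce ``the graph of a $(d-1)$-polytope,'' certified by induction on dimension --- is not a condition one can verify from the graph (reconstruction theorems determine the lattice of a graph \emph{known} to come from a polytope in the class; they do not recognise such graphs), and it is not sufficient either, since an induced subgraph can be isomorphic to a polytope graph without being the graph of any face. So what you have is a plausible program whose two load-bearing steps --- replacing the minimisation argument and pinning down the facets through $N$ --- are exactly the content of Theorems 4.5 and 4.8 of \cite{NevPinUgo17}, and they remain open in your write-up.
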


On the other hand \cite{NevPinUgo17} also exhibited a pair of inequivalent 4-polytopes $Q_4^1$ and $Q_4^2$ with eight  vertices, three of them nonsimple in each case, and with the same graph. In particular, they  are not reconstructible from their common graph, and \cref{thm:fourpointfive} does not extend to polytopes with more than two nonsimple vertices. Their construction is described in detail in \cite[Sec.~2]{NevPinUgo17}; here we simply illustrate them (\cref{fig:dpoly2dvertd-1ns} (b-c)). There are eight facets in  $Q_4^1$ but only seven in $Q_4^2$. One of the facets of  $Q_4^2$ is a  bipyramid over a simplex, namely 02467; its missing 2-face 246 is highlighted in \cref{fig:dpoly2dvertd-1ns}(c). This bipyramid is
split into two simplices to form  $Q_4^1$ (\cref{fig:dpoly2dvertd-1ns}(b)). In fact, this construction extends to higher dimensions; we summarise  further information about it in \cref{rmk:polytopes-Q_{q}} and refer to \cite[Prop.~2.2]{NevPinUgo17} for the details. Other reconstruction results can be found in \cite[Sec.~20.5]{GooORo04}.

\begin{figure}
\begin{center}
\includegraphics{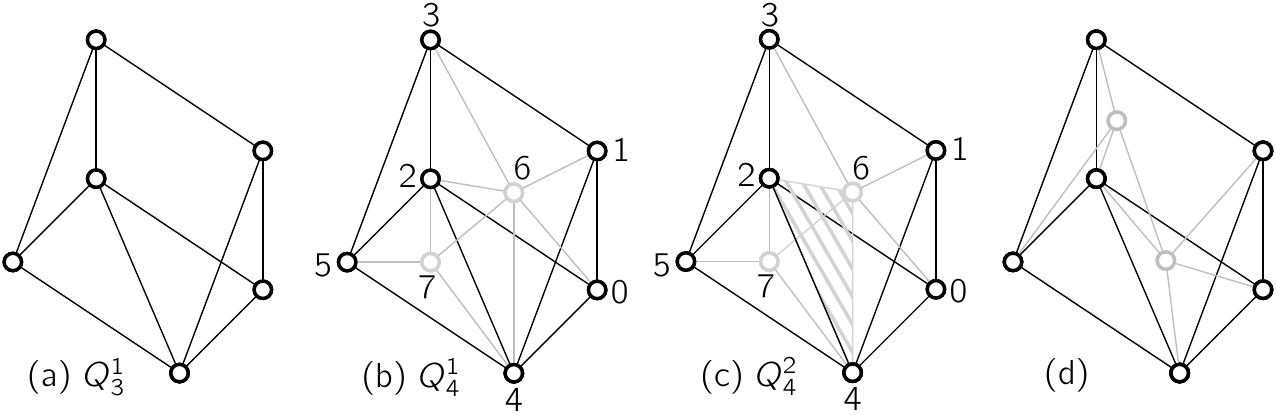}
\end{center}
\caption{Examples of $d$-polytopes with $2d$ vertices, of which precisely $d-1$ are nonsimple. (a) The polytope $Q_3^1$. (b-c) The pair of 4-polytopes $Q^{1}_{4}$ and $Q^2_{4}$. The missing 2-face of the bipyramid face 02467 in $Q^{2}_{4}$ is highlighted. (d) Polytope obtained from a simplicial $4$-prism by ``pulling''  a vertex (cf.~\cite[Sec.~5.2]{Gru03}) along one of the edges of a simplex facet.}
\label{fig:dpoly2dvertd-1ns}
\end{figure}

\begin{remark}\label{rmk:polytopes-Q_{q}}
For $d\ge 4$, there are inequivalent $d$-polytopes $Q^1_d$ and $Q^2_d$, each with $2d$ vertices, exactly $d-1$ of them nonsimple, and the same $(d-3)$-skeleton. The polytope $Q_d^1$ contains exactly $2d$ facets, while the polytope $Q^2_d$ contains $2d-1$ facets. The polytope $Q^2_d$ is obtained from $Q^1_d$ by gluing two simplex facets of $Q^{1}_{d}$ along a common ridge $R$ to create the bipyramid over $R$, which becomes a facet of $Q_{d}^{2}$.

For $d=3$, the construction in \cite{NevPinUgo17}  leads to two inequivalent 3-polytopes $Q^1_3$ and $Q^2_3$, each with six vertices, and hence the same 0-skeleton.  The polytope $Q_3^1$ (\cref{fig:dpoly2dvertd-1ns}(a)), sometimes called the tetragonal antiwedge, contains exactly six facets and two  nonsimple vertices, but the polytope $Q^2_3$, the triangular prism, contains only five facets and is actually simple.
\end{remark}

Here we keep studying the structure and reconstruction of polytopes which are ``close'' to being simple.   We consider two approaches which generalise the concept of simplicity and guarantee reconstructibility from graphs.

\begin{description}
\item[Approach 1] Consider $d$-polytopes with ``few'' nonsimple vertices; this is the approach taken in \cite{NevPinUgo17}.
\item[Approach 2] Consider $d$-polytopes with small excess.
\end{description}

Concerning Approach 1, the polytopes $Q^1_d$ and $Q^2_d$ (cf.~\cref{rmk:polytopes-Q_{q}}) show that in general by ``few'', we must mean at most $d-2$ nonsimple vertices. With regard to Approach 2, the aforementioned  pair of a bipyramid over a $(d-1)$-simplex and a pyramid over a bipyramid over a $(d-2)$-simplex have excess exactly $d$. So by small excess we must mean excess at most $d-1$. But then the excess theorem \cite[Thm.~3.4]{PinUgoYos16a} states that the smallest values of the excess of a $d$-polytope are 0 and $d-2$.

The main results of this paper are summarised next. We want to highlight that some of our results here came about after the authors tested a number of hypotheses on  \texttt{polymake} \cite{GawJos00}.

\begin{table}
\begin{tabular}{c c c c c}
{Facet}&{Polytope 1}&{Polytope 2}&{Polytope 3}&Polytope 4\\
\hline
 0:&\{2 3 4 5 6\}&\{2 3 4 5 6\}&\{1 2 3 4 5 6\}&\{2 3 4 5 6\}\\
1:&\{1 3 4 5 6\}&\{1 3 4 5 6\}&\{0 3 4 5 6\}&\{1 3 4 5 6\}\\
2:&\{1 2 5 6\}&\{0 1 2 5 6\}&\{0 2 5 6\}&\{0 1 2 5 6\}\\
3:&\{1 2 4 6\}&\{1 2 4 6\}&\{0 2 4 6\}&\{0 1 2 3 4\}\\
4:&\{1 2 3 5\}&\{0 2 3 5\}&\{0 2 3 5\}&\{1 2 4 6\}\\
5:&\{0 2 3 4\}&\{0 1 3 5\}&\{0 1 3 4\}&\{0 2 3 5\}\\
6:&\{0 1 3 4\}&\{1 2 3 4\}&\{0 1 2 4\}&\{0 1 3 5\}\\
7:&\{0 1 2 4\}&\{0 1 2 3\}&\{0 1 2 3\}\\
8:&\{0 1 2 3\}\\
\hline
\end{tabular}
\caption{Vertex-facet incidences of all nonpyramidal 4-polytopes with seven vertices and  four nonsimple vertices. They all have the same graph, and degree sequence $(4,4,4,5,5,6,6)$. They can be obtained from the catalogues  in \cite{FukMiyMor13a,Gru03}.}
\label{tab:4Polytopes7Vertices4Nonsimple}
\end{table}

In \S3, we apply Approach 1. A polytope with only $d+1$ vertices is obviously a simplex. The structure of polytopes with $d+2$ vertices is well understood \cite[Sec. 6.1]{Gru03}. They have either $d-2,d$ or $d+2$ nonsimple vertices. If such a polytope has only $d-2$ nonsimple vertices, it must be a $(d-2)$-fold pyramid over a quadrilateral, and so is reconstructible from its graph. We have already noted distinct examples with $d$ nonsimple vertices but the same graph.

For $d$-polytopes with $d+3$ vertices, we have a new result: if such a polytope has at most $d-1$ nonsimple vertices, then it is a  $(d-3)$-fold  pyramid over one of just three 3-dimensional examples, and consequently, the graph  determines its entire combinatorial structure. This is best possible in the sense that there are nonpyramidal  $d$-polytopes with $d+3$ vertices and exactly $d$ nonsimple vertices which are not reconstructible from their graphs (cf.~\cref{tab:4Polytopes7Vertices4Nonsimple}).
	
For a $d$-polytope $P$ with $d+4$ vertices, the slightly stronger assumption that $P$ has at most $d-2$ nonsimple vertices, is enough to ensure that the graph of $P$ determines its entire combinatorial structure. Furthermore, in the case of $P$ having $d-1$ nonsimple vertices, the polytope is still reconstructible from its 2-skeleton. Again these results are best possible, as shown by the examples $Q^1_4$ and $Q^2_4$.
	
Then for $k\ge 5$, any $d$-polytope with $d+k$ vertices and at most $d-k+3$ nonsimple vertices is determined by its graph of $P$.  In the particular case $k=5$, the pair of polytopes $Q^1_5$  and $Q^2_5$ shows that this is best possible.

In view of these three results and the results of \cite{NevPinUgo17} we venture to conjecture the following.

\begin{conjecture}\label{conj:smallVertRec}Let $P$ be a $d$-polytope with at most $d-2$ nonsimple vertices. Then the graph of $P$ determines its entire combinatorial structure. \end{conjecture}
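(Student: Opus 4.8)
The natural strategy is induction on the number of vertices, with the reconstruction reduced at each stage to recovering the $2$-skeleton rather than the full face lattice. This reduction is licensed by the result, quoted in the abstract, that a $d$-polytope with at most $d-2$ nonsimple vertices is already determined by its $2$-skeleton; it therefore suffices to prove that $G(P)$ determines the set of $2$-faces of $P$ whenever $P$ has at most $d-2$ nonsimple vertices. The reduction genuinely bites, because the hardest known pairs $Q^1_d$ and $Q^2_d$ (cf.\ \cref{rmk:polytopes-Q_{q}}) already agree on their $(d-3)$-skeleton, and hence on their $2$-skeleton once $d\ge 5$; they obstruct only the passage from the $2$-skeleton up to the lattice, a passage our hypothesis forbids them, since they carry $d-1$ nonsimple vertices. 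The base cases $f_0\le d+4$ are supplied verbatim by \S3, so that the genuinely new content lies in the regime of many vertices, $f_0\ge d+6$, where the theorem for $k\ge 5$ permits strictly fewer than $d-2$ nonsimple vertices.

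For the inductive step I would recover $2$-faces locally and then glue them. At a simple vertex $v$ the vertex figure is a $(d-1)$-simplex, so each of the $\binom{d}{2}$ pairs of edges at $v$ spans a unique $2$-face and each pair of facets through $v$ meets in a ridge; thus, exactly as in Kalai's treatment of simple polytopes, the $2$-faces incident to $v$ are forced by the graph, and each such polygon can be traced through its simple vertices by means of a good acyclic orientation. Since at most $d-2$ vertices of $P$ are nonsimple, every $2$-face all of whose vertices are simple is recovered unambiguously; more usefully, so is every $2$-face meeting so few nonsimple vertices that \cref{thm:fourpointfive} may be invoked on a suitable vertex figure or facet.

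The crux is to show that the remaining $2$-faces, those crowded with nonsimple vertices, cannot be assembled in two incompatible ways, and here \cref{lem:basic-excess} is the essential lever. Suppose two inequivalent polytopes shared a graph. The expected picture is that the two realisations differ by a local bipyramidal move: a bipyramid $\bipyr(R)$ appearing as a facet of one is split into two simplices along the ridge $R$ in the other, exactly as for $Q^1_d$ and $Q^2_d$. In the split realisation each vertex of $R$ is simple within one of the two simplex facets but nonsimple in $P$, so \cref{lem:basic-excess}(i) produces a facet meeting that simplex in a non-ridge, and \cref{lem:basic-excess}(ii) then declares every vertex of that intersection nonsimple. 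The aim is to convert this local nonsimplicity into a global count: to prove that any graph-ambiguity forces a full $(d-2)$-simplex of nonsimple vertices — the equator of the offending bipyramid — and hence at least $d-1$ nonsimple vertices, contradicting the hypothesis.

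The hard part, and the reason this remains only a conjecture, is exactly that last implication. One must establish that the bipyramidal split is, modulo the moves already understood, the \emph{only} mechanism by which a graph admits two polytopal realisations, and in particular that no ambiguity can be triggered by a non-simplex ridge, nor by a scattered handful of nonsimple vertices that never fill out a $(d-2)$-face. Controlling the mutual position of several nonsimple vertices that refuse to organise into a single clean ridge — so that the local certificates from \cref{lem:basic-excess} and \cref{thm:fourpointfive} patch together into one globally consistent lattice — is where the argument must do real work beyond the vertex-counted cases of \S3, and is the obstacle I expect to dominate the proof.
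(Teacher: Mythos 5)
This statement is \cref{conj:smallVertRec}, and the first thing to say is that the paper does not prove it: it is posed as an open conjecture, ventured precisely because the results of \S2--\S3 only cover polytopes with few vertices. So there is no ``paper proof'' to compare against, and your proposal does not close the conjecture either --- as you yourself acknowledge. Two of your reductions are sound: (i) by the $2$-skeleton result quoted in the abstract, it does suffice to show that $G(P)$ determines the $2$-faces of $P$; and (ii) the cases $f_0\le d+5$ are indeed covered by \cref{thm:dPlus3VertRec}, \cref{thm:dPlus4VertRec} and \cref{thm:d+kVert} (for $k=5$ the bound $d-k+3$ equals exactly $d-2$), so the open regime is $f_0\ge d+6$, as you say. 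But beyond these reductions the proposal is a research plan, not a proof.

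There are two genuine gaps. The central one you name yourself: proving that the bipyramidal split (as in $Q^1_d$ versus $Q^2_d$, \cref{rmk:polytopes-Q_{q}}) is the \emph{only} mechanism by which two inequivalent polytopes can share a graph. This is not a ``last implication'' to be checked --- it is essentially the entire content of the conjecture, and nothing in your sketch constrains how several nonsimple vertices can sit relative to one another in the many-vertex regime; the structural tools of \S2 (Balinski-type connectivity arguments forcing pyramids) all break down once $f_0>2d$, which is exactly why the paper stops where it does. Second, your intermediate step is itself flawed: you claim that a $2$-face meeting few nonsimple vertices is recovered by ``invoking \cref{thm:fourpointfive} on a suitable vertex figure or facet.'' This is circular for facets --- applying that theorem to a facet $F$ requires already knowing $G(F)$ as a subgraph of $G(P)$, and identifying the facet subgraphs is precisely the reconstruction problem --- and it is unavailable for vertex figures, since the graph of the vertex figure at a nonsimple vertex is not an induced subgraph of $G(P)$ and is not determined by it. Likewise, tracing a polygon ``through its simple vertices'' with a good orientation stalls exactly when the polygon passes through a nonsimple vertex, which is the only case at issue. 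So even the partial claim that all $2$-faces with at most one or two nonsimple vertices are graph-determined remains unestablished.
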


These results depend on some results about pyramids, which are presented first in \S2. Recall a polytope is an {\it $r$-fold pyramid} if it is a pyramid whose {\it basis} is an $(r-1)$-fold pyramid, and any polytope is a 0-fold pyramid. If a vertex $u$ is an apex of a pyramid $P$, we say that $P$ is {\it pyramidal} at $u$. The main conclusion of \S2, of interest in its own right, is that a $d$-polytope with at most $d+k$ vertices and at most $d-1$ nonsimple vertices, is necessarily a $(d-k)$-fold pyramid. This is only informative if $k<d$. For $k=d$, we have the following modification: a $d$-polytope with $2d$ vertices, and at most $d-2$ nonsimple vertices, is either a simplicial $d$-prism or a pyramid. Furthermore, this is best possible as there are 4-polytopes with eight vertices and three nonsimple vertices which are neither simplicial 4-prisms nor pyramids (namely $Q_4^1$ and $Q_4^2$). Recall a {\it simplicial $d$-prism}  is any prism whose base is a $(d-1)$-simplex.

In \S4, we pay regard to polytopes with small excess, and completely settle the reconstruction problem for them by proving that for a $d$-polytope with excess at most $d-1$, the graph determines its entire combinatorial structure. This result is best possible in the sense that there are $d$-polytopes with excess $d$ which are not reconstructible from their graphs.

\section{Polytopes with a small number of nonsimple vertices are Pyramids}

Here we show that knowing a polytope has strictly less than $d$ nonsimple vertices gives us a lot of information about its structure. In particular, large classes of such polytopes must be pyramids. This is crucial for the reconstruction results in the next section.
Let us call two vertices of a polytope {\it nonneighbours} if they are not adjacent. Note that every vertex is thus a nonneighbour of itself.

\begin{theorem} \label{thm:pyramidRec}
Let $P$ be a $d$-polytope,  which contains at most $d-k$ nonsimple vertices, where $k\le d$. Then either $P$ is a pyramid, or each nonsimple vertex has at least $k$ simple nonneighbours.

In case $P$ is pyramidal,  it is, for each $j$, reconstructible from its $j$-skeleton if and only if the basis is reconstructible from its $j$-skeleton.
\end{theorem}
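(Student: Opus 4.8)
The two assertions are essentially independent, so I would treat them separately. For the dichotomy I would argue by contraposition: fix a nonsimple vertex $v$ and suppose it has at most $k-1$ simple nonneighbours; the goal is to prove that $P$ is then a pyramid, in fact with apex $v$. The first step is a count. A vertex $u\ne v$ fails to be a simple neighbour of $v$ precisely when it is nonsimple or is a nonneighbour of $v$. The nonsimple vertices other than $v$ number at most $(d-k)-1$, and by assumption $v$ has at most $k-1$ simple nonneighbours; hence at most $(d-k-1)+(k-1)=d-2$ vertices other than $v$ are \emph{not} simple neighbours of $v$. Call these the exceptional vertices: there are at most $d-2$ of them.

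The heart of the argument exploits simplicity through vertex figures. If $w$ is a simple neighbour of $v$, then the vertex figure $P/w$ is a $(d-1)$-simplex, and the vertex of $P/w$ corresponding to the edge $vw$ lies in exactly $d-1$ of its $d$ facets; translating back, $w$ lies in exactly $d-1$ facets of $P$ containing $v$, hence in \emph{exactly one} facet of $P$ that avoids $v$. Consequently two distinct facets avoiding $v$ can share no simple neighbour of $v$, so any common vertex of two such facets is exceptional. Now suppose, for a contradiction, that $P$ is not a pyramid with apex $v$; then more than one facet avoids $v$. The facets avoiding $v$ form the antistar of $v$, a connected pure $(d-1)$-complex (in fact a $(d-1)$-ball), so two of them meet in a common ridge $R$. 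As a $(d-2)$-polytope, $R$ has at least $d-1$ vertices, each lying in two facets avoiding $v$ and hence exceptional, contradicting the bound $d-2$. Therefore exactly one facet $G$ avoids $v$; since every vertex other than $v$ lies in some face avoiding $v$ and the antistar is the single closed facet $G$, we get $V(G)=V(P)\setminus\{v\}$, i.e.\ $P=\pyr(G)$. I expect the main obstacle to be conceptual rather than computational: mere high adjacency of $v$ does not force a pyramid (a $2$-neighbourly polytope such as the cyclic polytope $C(6,4)$ has every vertex adjacent to all others yet is not a pyramid), so the whole force of the argument must come from \emph{simplicity} of the neighbours via the unique-avoiding-facet property, and the crux is getting the ridge bound $d-1$ to beat the exceptional-vertex bound $d-2$.

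For the second assertion, write $P=\pyr(B)$ with apex $a$ and base $B$. Every face of $P$ is either a face of $B$ or the cone $\conv(G\cup\{a\})$ over a face $G$ of $B$, so the face lattice of $P$ is completely determined by that of $B$ and conversely; I would make this bijection effective at the level of $j$-skeletons. Assuming $B$ is reconstructible from its $j$-skeleton, from the $j$-skeleton of $P$ I would first locate the apex $a$ (a vertex joined to every other, through which $P$ is pyramidal); then the $j$-faces of $P$ avoiding $a$ are exactly the $j$-faces of $B$, while those containing $a$ are the cones over the $(j-1)$-faces of $B$. Reading off the former recovers the $j$-skeleton of $B$, whence by hypothesis the whole lattice of $B$, and coning with $a$ recovers $P$. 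Conversely, assuming $P$ is reconstructible from its $j$-skeleton, from the $j$-skeleton of $B$ one builds that of $P$ by adjoining $a$ and coning all $(j-1)$-faces of $B$, reconstructs $P$, and then recovers $B$ as the unique facet of $P$ not containing $a$. The one delicate point is the unambiguous identification of the apex from the $j$-skeleton — an issue only for iterated pyramids, where any apex serves equally well, or when $j$ is too small to detect edges — and I would dispose of it by treating the cases $j\ge1$ and $j=0$ separately.
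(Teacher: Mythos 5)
Your proof of the dichotomy is correct, but it takes a genuinely different route from the paper's. You localise at a nonsimple vertex $v$ with at most $k-1$ simple nonneighbours, use the simplex vertex figure to show each simple neighbour of $v$ lies in \emph{exactly one} facet avoiding $v$, and then play the count of at most $(d-k-1)+(k-1)=d-2$ exceptional vertices against the at least $d-1$ vertices of a ridge shared by two facets avoiding $v$. The paper instead invokes Balinski's theorem: deleting the at most $(d-k)+(k-1)=d-1$ vertices that are nonsimple or nonadjacent to $u$ leaves the set $S$ of simple neighbours of $u$ connected; since a simple vertex lying in a facet $F$ with $u\notin F$ has all its neighbours except $u$ inside $F$, a single facet through one vertex of $S$ and avoiding $u$ must absorb all of $S$, and a degree count (a vertex outside $F\cup\{u\}$ would have degree at most $d-2$) forces $P=\pyr(F)$ with apex $u$. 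Both arguments exploit the same unique-avoiding-facet property of simple vertices; the difference lies in the global step. Yours rests on the assertion that the facets avoiding $v$ form a strongly connected complex: mere connectedness of a pure $(d-1)$-complex does \emph{not} yield two facets sharing a ridge, so your parenthetical ``$(d-1)$-ball'' is carrying the whole weight. The assertion is true --- for instance, a Bruggesser--Mani line shelling through $v$ puts the star of $v$ first, and the reversed final segment shells the antistar, so with two or more facets avoiding $v$ some pair meets in a ridge --- but it needs a proof or citation, whereas the paper's Balinski argument is self-contained at the level of graph connectivity. Your remark that adjacency alone cannot suffice (the cyclic polytope $C(6,4)$) is exactly the point the paper makes after \cref{cor:pyramidRec} with the bipyramid/pyramid-over-bipyramid pair.

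For the reconstruction equivalence, your cone/join argument agrees in substance with the paper's: the facets of $\pyr(B)$ are $B$ together with the pyramids over the ridges of $B$, and pyramids over inequivalent bases with isomorphic $j$-skeleta have isomorphic $j$-skeleta, which gives the ``only if'' direction. The apex-identification issue you flag is genuine, but note the paper's own proof is equally silent on it: it treats basis and apex as known, which matches how the theorem is later applied, namely to polytopes already certified to be pyramidal by the structural results. Your proposed fix (splitting $j\ge 1$ from $j=0$) does not by itself resolve it --- as your own $C(6,4)$ example shows, a vertex adjacent to all other vertices need not be an apex --- so you should either interpret reconstructibility relative to the knowledge that the polytope is a pyramid, or argue via base-uniqueness: if $P$ is a pyramid with apex $a$ over $B$ and with apex $a'$ over $B'$, then $B$ and $B'$ are both pyramids over the common ridge $B\cap B'$, hence combinatorially equivalent, which is what your converse direction implicitly uses.
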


\begin{proof}
The second alternative in the conclusion is trivially true if $k=0$.  If $k=d$, then $P$ is simple, and the second alternative in the conclusion is vacuously true. Henceforth, we assume that $0<k<d$.

Consider first the case that some nonsimple vertex $u$ is nonadjacent to at most $k-1$ simple vertices.

Removing all the nonsimple vertices and all the simple vertices which are not adjacent to $u$ cannot disconnect the graph, according to Balinski's theorem \cite{Bal61}. Therefore, the graph $G(S)$ induced by the set $S$ of simple vertices which are neighbours of $u$ is connected. Let $x$ be one such simple vertex in $S$. Then, there is a facet $F$ containing $x$ but not $u$.  Given any simple vertex in $V(F)\cap S$, all its neighbours other than $u$ must also be in $F$. Since $G(S)$ is connected, $F$ contains all the  vertices in $S$.  If some vertex $y\ne u$ is not in this facet, it cannot be a neighbour of any member of $S$. But outside $S$ there are at most $d-k+k-1$ vertices, including $y$; this mean that $y$ has  degree at most $d-2$ in the polytope. This absurdity implies that every vertex of $P$ is in $F\cup\{u\}$, i.e. $P$ is a pyramid with basis $F$ and apex $u$.

We only prove the reconstruction statement for graphs (1-skeletons), but the result extends to $j$-skeletons for $j\ge 2$.

So suppose now that $P$ is pyramidal with basis $F$ and apex $u$. If $F$ is reconstructible from its graph, then we can obtain the vertex set of each  $(d-2)$-face $R$ of $F$, and from it, the corresponding facet of $P$, the one with vertex set $V(R)\cup \{u\}$. Thus, we can get the vertex-facet incidence graph of $P$. Otherwise $P$ is not reconstructible.
\end{proof}

There are examples of nonsimplicial pyramidal and simplicial nonpyramidal $d$-polytopes with exactly $d$ nonsimple vertices and the same graph; look no further than our old friends, the bipyramid over a $(d-1)$-simplex and the pyramid over a bipyramid over a $(d-2)$-simplex. Thus we get  the following  as corollary of \cref{thm:pyramidRec}.

\begin{corollary}\label{cor:pyramidRec}
A $d$-polytope having at most $d-1$ nonsimple vertices, and a vertex adjacent to every other vertex, must be a pyramid.

Furthermore, this statement is best possible, as there are pyramidal and nonpyramidal $d$-polytopes with exactly $d$ nonsimple vertices, at least one of which is adjacent to every other vertex; and in fact with the same graph.
\end{corollary}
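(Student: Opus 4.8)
The plan is to obtain the first assertion as an immediate specialisation of \cref{thm:pyramidRec} at $k=1$, and then to certify optimality on the two polytopes named just before the statement.

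For the first assertion, let $v$ be the vertex adjacent to every other vertex, and assume $P$ has at most $d-1$ nonsimple vertices. I would first dispose of the case where $v$ is simple: then $\deg v=d$, so $v$ has exactly $d$ neighbours, and since it is adjacent to all other vertices, $P$ has only $d+1$ vertices and is a $d$-simplex, hence a pyramid. So I may assume $v$ is nonsimple. Now apply \cref{thm:pyramidRec} with $k=1$, noting its hypothesis ``at most $d-k$ nonsimple vertices'' is exactly our ``at most $d-1$''. Its conclusion offers two alternatives: either $P$ is a pyramid, and we are done, or every nonsimple vertex has at least $k=1$ simple nonneighbour. In the latter case $v$, being nonsimple, would possess a simple vertex not adjacent to it, contradicting that $v$ is adjacent to every other vertex. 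Thus $P$ must be a pyramid.

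For the optimality claim I would take $P_1:=\bipyr(\Delta_{d-1})$, the bipyramid over a $(d-1)$-simplex, and $P_2:=\pyr(\bipyr(\Delta_{d-2}))$, the pyramid over a bipyramid over a $(d-2)$-simplex; each is a $d$-polytope with $d+2$ vertices (here $d\ge 3$). The routine bookkeeping is to list the degrees. In $P_1$ the $d$ base vertices each have degree $d+1$ --- adjacent to the other $d-1$ base vertices and to both apexes --- and are therefore nonsimple, while the two apexes have degree $d$ and are simple; moreover any base vertex is adjacent to all $d+1$ other vertices. In $P_2$, writing $a$ for the outer apex, the $d-1$ vertices of $\Delta_{d-2}$ together with $a$ all have degree $d+1$ and are nonsimple, the two inner apexes have degree $d$ and are simple, and $a$ is adjacent to every other vertex. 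So each of $P_1,P_2$ has exactly $d$ nonsimple vertices together with a nonsimple vertex adjacent to all others; $P_2$ is a pyramid, whereas $P_1$ is not, since a simplicial pyramid is a simplex and $P_1$ has $d+2>d+1$ vertices.

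The one genuinely substantive point --- and the step I expect to be the main obstacle --- is verifying that $P_1$ and $P_2$ have the same graph. I expect this to drop out of the degree analysis above: in each polytope the $d$ vertices of degree $d+1$ induce a complete graph $K_d$ (the base simplex in $P_1$; the inner simplex together with $a$, which is joined to all of it, in $P_2$), while each of the two remaining vertices is adjacent to all $d$ of these but not to the other degree-$d$ vertex. Hence in both cases the graph is $K_{d+2}$ with a single edge removed, so any bijection sending the two degree-$d$ vertices of $P_1$ to those of $P_2$ is an isomorphism. This shows simultaneously that the threshold $d-1$ in the first assertion is sharp and that the surplus nonsimple vertex leaves no trace in the graph.
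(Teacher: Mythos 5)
Your proof is correct in substance and takes the same route as the paper: the first assertion is deduced from \cref{thm:pyramidRec} with $k=1$ (the paper's convention that every vertex is a nonneighbour of itself makes your nonsimple case work exactly as you argue, and your separate treatment of a simple universal vertex, which forces $P$ to be a simplex, is a detail the paper leaves implicit), and the sharpness examples are the paper's own pair, the bipyramid over $\Delta_{d-1}$ and the pyramid over the bipyramid over $\Delta_{d-2}$, whose common graph is $K_{d+2}$ minus one edge. The only slip is your parenthetical ``$d\ge 3$'': the construction requires $d\ge 4$. For $d=3$, $\bipyr(\Delta_{1})$ is a quadrilateral, so $\pyr(\bipyr(\Delta_{1}))$ is the pyramid over a quadrilateral, which has only one nonsimple vertex (not $d=3$) and whose graph is a wheel rather than $K_{5}$ minus an edge; indeed no pair as in the second assertion can exist when $d=3$, since $3$-polytopes are reconstructible from their graphs, so two of them with the same graph cannot be one pyramidal and one not.
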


Before proceeding with our results, we need two simple lemmas.

\begin{lemma}[{\cite[Lem.~10(iii)]{PrzYos16}}]\label{lem:simplePolytopes}  Up to combinatorial equivalence, the $d$-simplex and the simplicial $d$-prism are the only  simple $d$-polytopes with no more than $2d$ vertices.
\end{lemma}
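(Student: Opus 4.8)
The plan is to induct on $d$. The base cases $d\le 2$ are immediate: every polygon is simple, and the only ones with at most $2d=4$ vertices are the triangle and the quadrilateral, i.e.\ the $2$-simplex and the simplicial $2$-prism. For the inductive step let $P$ be a simple $d$-polytope with $f_0\le 2d$. If $f_0=d+1$ then $P$ is the $d$-simplex, so I may assume $f_0\ge d+2$ and aim to show $f_0=2d$ with $P$ the simplicial prism. Fix a facet $F$. Being a facet of a simple polytope, $F$ is itself a simple $(d-1)$-polytope, so each of its vertices has exactly $d-1$ neighbours inside $F$ and hence exactly one neighbour in $W:=V(P)\setminus V(F)$; this defines a matching $\phi\colon V(F)\to W$.

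The first step is to bound $|V(F)|$ so that induction applies. If $|W|=1$, the single vertex $w\in W$ would be adjacent to all $|V(F)|\ge d$ vertices of $F$, forcing $\deg w\ge d$, hence $|V(F)|=d$ and $f_0=d+1$, contrary to assumption; so $|W|\ge 2$ and $|V(F)|=f_0-|W|\le 2d-2$. By the inductive hypothesis $F$ is therefore a $(d-1)$-simplex or a simplicial $(d-1)$-prism. I would also record, as a consequence of \cref{lem:basic-excess}(ii), that since $P$ has no nonsimple vertices, \emph{any two distinct facets of $P$ are either disjoint or meet in a ridge}; this ``ridge property'' is what makes the incidence bookkeeping below work.

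In the main case, $F=\{v_1,\dots,v_d\}$ is a simplex, with ridges $R_k=F\setminus\{v_k\}$; by the ridge property each $R_k$ equals $F\cap G_k$ for a unique further facet $G_k$, and counting the facets through $v_i$ shows they are exactly $F$ together with the $G_k$ for $k\ne i$. Since $F$ omits $\phi(v_i)$, the $d-1$ facets on the edge $v_i\phi(v_i)$ are precisely the $G_k$ with $k\ne i$, so $\phi(v_i)$ lies in all these and in exactly one additional facet. This already yields injectivity of $\phi$: if $\phi(v_i)=\phi(v_j)$ with $i\ne j$, that vertex would lie in $G_k$ for every $k\ne i$ and every $k\ne j$, hence in all $d$ of $G_1,\dots,G_d$, leaving no room for a further distinct facet. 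Thus $|W|\ge d$, and with $|W|=f_0-d\le d$ we get $f_0=2d$ and $\phi$ a bijection onto $W$. Finally, the one extra facet $H$ through $\phi(v_i)$ is not among the $G_k$, so it meets $F$ in no ridge and is disjoint from $F$, giving $V(H)\subseteq W$; as $H$ is a simple $(d-1)$-polytope it has at least $d=|W|$ vertices, whence $V(H)=W$ and $H$ is a $(d-1)$-simplex. Then $F$ and $H$ are disjoint simplex facets with $V(P)=V(F)\sqcup V(H)$ joined by the perfect matching $\phi$, the $G_k$ are prisms over the $R_k$, and the vertex--facet incidences are exactly those of the simplicial $d$-prism.

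The remaining possibility, that $F$ is a simplicial $(d-1)$-prism, forces $|W|=2$ and $f_0=2d$; I would dispose of it by exhibiting a genuine simplex facet of $P$ on a ridge corresponding to one of the two simplex bases of $F$, thereby reducing to the case just treated. I expect this reduction — showing that a non-simplex simple polytope with $f_0\le 2d$ must actually possess a simplex facet — to be the main obstacle, since it is the one point where I cannot simply read off the structure from $F$ and must again use the exact-degree condition to prevent the two outer vertices from attaching in an unexpected way. Either way one concludes that no simple $d$-polytope has $f_0$ strictly between $d+1$ and $2d$. A less elementary alternative would be to dualise and invoke the Lower Bound Theorem: a simplicial $d$-polytope with at most $2d$ facets has at most $d+2$ vertices, and among these only the simplex and the bipyramid over a $(d-1)$-simplex qualify, which dualise back to the $d$-simplex and the simplicial $d$-prism.
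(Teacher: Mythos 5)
The paper itself gives no proof of this lemma---it is imported verbatim from \cite[Lem.~10(iii)]{PrzYos16}---so your argument has to stand entirely on its own, and as written it does not. Your induction framework, the matching $\phi$, the ridge property from \cref{lem:basic-excess}(ii), and the reduction to $F$ being a simplex or a prism are all sound. The first genuine gap is the assertion, used twice, that the one facet through $\phi(v_i)$ other than the $G_k$ with $k\ne i$ is \emph{not} among $G_1,\dots,G_d$ (equivalently, that $\phi(v_i)\notin G_i$). This is never proved, and your injectivity argument collapses without it: if $\phi(v_i)=\phi(v_j)=w$, all you may conclude is that the $d$ facets through $w$ are precisely $G_1,\dots,G_d$, which for a simple vertex is not by itself absurd---``leaving no room for a further distinct facet'' is not a contradiction, because nothing established so far requires a facet through $w$ outside $\{G_1,\dots,G_d\}$. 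The same unproved claim is invoked again when you say the extra facet $H$ ``is not among the $G_k$'' in order to get $V(H)\subseteq W$. The gap is reparable, but it needs a real argument, e.g.: in a simple polytope any $d-1$ of the $d$ facets through a vertex $w$ intersect in an edge through $w$; since $v_m\in R_k\subseteq G_k$ for every $k\ne m$, the edge $\bigcap_{k\ne m}G_k$ would have to be $wv_m$, so $w$ would be adjacent to \emph{every} vertex of $F$; then $V(F)\cup\{w\}$ admits no edge to the rest of the graph, and connectivity of $G(P)$ forces $f_0=d+1$, contradicting $f_0\ge d+2$. The same computation, applied after injectivity is secured, shows $\phi(v_i)\notin G_i$ and hence legitimises your identification of $H$.

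The second gap you flag yourself: the case where the chosen facet $F$ is a simplicial $(d-1)$-prism is left entirely open (``I would dispose of it\dots''), and it cannot be skipped, since nothing proven so far guarantees that $P$ possesses a simplex facet. It can be closed along the lines you anticipate: if $F$ is a prism then $f_0=2d$ and $|W|=2$; let $B$ be one of the two simplex ridges of $F$ and $G$ the other facet through $B$; the ridge property forces $V(G)\subseteq V(B)\cup W$, so $|V(G)|\le d+1$, and for $d\ge 4$ the induction hypothesis then makes $G$ a $(d-1)$-simplex (a simplicial $(d-1)$-prism has $2(d-1)>d+1$ vertices), which hands you the simplex facet needed to run your main case; $d=3$ is a finite check. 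Finally, your ``less elementary alternative'' is also not yet a proof: the Lower Bound Theorem gives $f_0\le d+2$ for the dual, but simplicial $d$-polytopes with $d+2$ vertices are not only bipyramids (cyclic polytopes $C_d(d+2)$ are others), so you must additionally invoke the equality case of Barnette's theorem (only stacked polytopes attain $f_{d-1}=(d-1)f_0-(d+1)(d-2)$) to conclude that a facet count of $2d$ forces the bipyramid.
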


\begin{lemma}\label{lem:NoSimpleFaces} Suppose $P$ is a $d$-polytope with $2d$ or fewer vertices, and that some facet $F$ of $P$ contains only  simple vertices. Then $P$ is either a simplicial prism, or a pyramid over $F$.
\end{lemma}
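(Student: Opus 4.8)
The plan is to extract from the hypothesis on $F$ (that all its vertices are simple in $P$) enough local rigidity to force $P$ into one of the two named shapes, and then to close the argument with a single degree/excess count rather than a delicate facet-by-facet reconstruction. First I would record the local picture at a vertex $v$ of $F$. Since $v$ is simple in $P$, it lies on exactly $d$ facets and $d$ edges, each edge being the intersection of $d-1$ of those facets; of these edges exactly $d-1$ lie in $F$ and exactly one leaves $F$. Hence $v$ has degree $d-1$ within $F$ and a unique neighbour in $W:=V(P)\setminus V(F)$. As this holds for every vertex of $F$, the facet $F$ is itself a simple $(d-1)$-polytope, and the number of edges joining $F$ to $W$ equals $n:=|V(F)|$. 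Write $m:=|W|$, so that $n+m\le 2d$.

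Next I would dispose of the easy regime. We always have $m\ge 1$ (otherwise $P\subseteq\aff F$, forcing $\dim P\le d-1$). If $m=1$, the single vertex outside the supporting hyperplane of $F$ serves as an apex, so $P$ is a pyramid over $F$ and we are done. So assume $m\ge 2$; then $n\le 2d-2=2(d-1)$, and \cref{lem:simplePolytopes} applied to the simple $(d-1)$-polytope $F$ forces $F$ to be either a $(d-1)$-simplex (so $n=d$) or a simplicial $(d-1)$-prism (so $n=2d-2$).

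The crux is a global excess computation. Because the vertices of $F$ are simple they contribute nothing to the excess, so, writing $e_W$ for the number of edges inside $W$ and using that each vertex of $F$ sends exactly one edge to $W$, a routine degree count yields
\[
\xi(P)=n+2e_W-dm.
\]
Since $\xi(P)\ge 0$ and $e_W\le\binom{m}{2}=\tfrac12 m(m-1)$, we obtain $dm-n\le m(m-1)$. In the simplex case $n=d$ this reads $d\le m$ (cancelling $m-1>0$), which together with $m\le d$ forces $m=d$ and the equality $e_W=\binom{d}{2}$, whence $\xi(P)=0$. In the prism case $n=2d-2$ and $m=2$, nonnegativity forces $e_W=1$ and again $\xi(P)=0$. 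Either way $P$ is a simple polytope with exactly $2d$ vertices, so a second application of \cref{lem:simplePolytopes} identifies $P$ as a simplicial $d$-prism, since it has too many vertices ($2d>d+1$) to be a simplex.

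I expect the main obstacle to be the first step: pinning down rigorously that each simple vertex of $P$ lying in $F$ contributes exactly one edge leaving $F$ and degree $d-1$ within $F$, since everything downstream (the exact edge count $n$, the slack in the excess inequality, and the equality analysis) rests on it. A secondary point needing care is the clean dichotomy at $m=1$ versus $m\ge 2$: one must check that $m\ge 2$ genuinely forces $n\le 2d-2$ so that \cref{lem:simplePolytopes} applies to $F$, and that the equality cases in the inequality really pin $\xi(P)$ to $0$ rather than merely bounding it. Notably this route avoids \cref{lem:basic-excess} entirely; an alternative would use part~(ii) there to show that every facet meets $F$ in a ridge, but the excess count is shorter and self-contained.
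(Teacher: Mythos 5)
Your proof is correct, but it takes a genuinely different route from the paper's. The paper disposes of the lemma in two sentences via Minkowski decomposability: Shephard's theorem \cite[Thm.~(15)]{She63} (a polytope with a facet all of whose vertices are simple is either decomposable or a pyramid over that facet) combined with \cite[Thm.~9]{PrzYos16} (the only decomposable $d$-polytope with at most $2d$ vertices is the prism). You avoid decomposability entirely: after the standard local fact that a simple vertex $v$ of $P$ lying on $F$ has exactly $d-1$ edges inside $F$ and exactly one leaving it --- which is rigorous once one recalls that $P$ lies in the cone $v+\mathrm{cone}\{u-v : u \text{ adjacent to } v\}$, so not all neighbours of $v$ can lie in $\aff F$ --- you apply \cref{lem:simplePolytopes} to the simple $(d-1)$-polytope $F$, and then the excess identity $\xi(P)=n+2e_W-dm$ pins down $\xi(P)=0$ in both sub-cases, so a second application of \cref{lem:simplePolytopes}, now to $P$ itself, finishes. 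I checked both equality analyses: in the simplex case $n=d$, the bounds $d(m-1)\le m(m-1)$ (using $m\ge 2$) and $m\le d$ force $m=d$, and then $0\le\xi(P)\le d+d(d-1)-d^2=0$; in the prism case $n=2d-2$, necessarily $m=2$, whence $e_W=1$ and $\xi(P)=2e_W-2=0$; in either case $P$ is simple with $2d>d+1$ vertices, hence a simplicial $d$-prism. What each approach buys: the paper's argument is essentially a two-line citation, but it rests on external results from the theory of decomposable polytopes; yours is elementary and self-contained modulo \cref{lem:simplePolytopes} (which the paper states anyway and which both approaches draw from \cite{PrzYos16}), at the price of a longer case analysis, and it yields as a by-product the precise adjacency structure outside $F$ (either $d$ pairwise adjacent outer vertices, or two adjacent ones).
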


\begin{proof} Recall that the {\it Minkowski sum} of two polytopes $Q+R$ is defined simply as $\{x+y: x\in Q, y\in R\}$,
and that a polytope is called {\it (Minkowski) decomposable} if
it can be written as the Minkowski sum of two polytopes,  which are not similar to it. Actually, these definitions are not really important to us now; more important are the following two results about decomposability.

Shephard  \cite[Thm.~(15)]{She63} proved that if some facet $F$ of a polytope $P$ contains only  simple vertices, then $P$ is either decomposable, or a pyramid over $F$; see \cite[Prop. 5]{PrzYos16} for another proof. And according to \cite[Thm. 9]{PrzYos16}, the only decomposable polytope with $2d$ or fewer vertices is the prism. The lemma follows from combining these two results.\end{proof}

By an application of \cref{lem:simplePolytopes}, $P$ must actually be a triplex as defined in \cite{PinUgoYos15}. We do not need this stronger conclusion here.

Additional assumptions about the total number of vertices now allow us to draw a stronger conclusion.

\begin{theorem}\label{thm:pyramid} Let $P$ be a $d$-polytope with fewer than $2d$ vertices, of which at most $d-1$ are nonsimple. Then $P$ is a  pyramid.
\end{theorem}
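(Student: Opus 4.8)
The plan is to assume that $P$ is \emph{not} a pyramid and force a contradiction, using the dichotomy of \cref{thm:pyramidRec} to extract strong degree information and then squeezing it against the excess theorem of \cite{PinUgoYos16a}. Write $n:=f_0(P)$ and let $s$ be the number of nonsimple vertices, so that $s\le d-1$ and $n<2d$. First I would dispose of the simple case $s=0$: by \cref{lem:simplePolytopes} the only simple $d$-polytopes with at most $2d$ vertices are the $d$-simplex and the simplicial $d$-prism, and as the prism has exactly $2d$ vertices while $n<2d$, the polytope $P$ is a simplex and hence a pyramid. So I assume $s\ge 1$ and that $P$ is not a pyramid, and apply \cref{thm:pyramidRec} with $k=d-s$ (legitimate since $P$ has exactly $s=d-k$ nonsimple vertices and $1\le k\le d$). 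The pyramid alternative being excluded, every nonsimple vertex $v$ has at least $d-s$ simple nonneighbours; since $v$ has at least $d+1$ neighbours, the $n-1$ other vertices satisfy $\deg v\le (n-1)-(d-s)$, so $\xi(v)\le n-2d+s-1$. Setting $t:=2d-n\ge 1$ this reads $\xi(v)\le s-t-1$, and whenever $s\le t+1$, equivalently $n\le 2d-s+1$, it gives $\xi(v)\le 0$, contradicting the nonsimplicity of $v$. Thus $P$ is a pyramid throughout $n\le 2d-s+1$.

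Summing the bound over all $s$ nonsimple vertices yields $\xi(P)\le s(s-t-1)$, while the excess theorem forces $\xi(P)\ge d-2$ because $P$ is not simple. Together these inequalities confine the remaining cases to the narrow region $d+3\le n\le 2d-1$ with $s$ close to $d-1$, where the global count is exactly balanced and produces no contradiction on its own; this residual band is the crux of the argument.

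To settle it I would induct on $d$, the low dimensions following from \cref{lem:simplePolytopes} and the small-vertex classifications. Choose a nonsimple vertex $u$ together with a simple nonneighbour $w$ of $u$ (which exists since $d-s\ge 1$), and seek a facet $F$ omitting both $u$ and $w$. Every vertex simple in $P$ stays simple in $F$ (a simple vertex lies in exactly $d$ facets and so has degree $d-1$ in each of them), so the nonsimple vertices of the $(d-1)$-polytope $F$ are among the at most $s-1\le d-2$ nonsimple vertices of $P$ lying in $F$; moreover $f_0(F)\le n-2\le 2d-3<2(d-1)$. Hence $F$ meets the inductive hypothesis and is itself a pyramid. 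The final step is to transfer this structure upward: using the nonneighbour data from \cref{thm:pyramidRec} and the control of non-ridge intersections afforded by \cref{lem:basic-excess,lem:NoSimpleFaces}, I would exhibit a vertex of $P$ adjacent to every other vertex and then invoke \cref{cor:pyramidRec} to conclude that $P$ is a pyramid, contradicting our standing assumption.

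The main obstacle is this transfer, together with the combinatorial bookkeeping in the boundary band. Two points are delicate. First, one must guarantee the existence of a facet $F$ omitting both $u$ and its nonneighbour $w$; although $u$ and $w$ are nonadjacent, it is not automatic that some facet avoids the pair, and securing $f_0(F)<2(d-1)$ is exactly what lets the induction run without appealing to the as-yet-unproved $2d$-vertex companion classification. Second, and more seriously, knowing that $F$ is a pyramid does not by itself name the apex of $P$: propagating the pyramidal structure of a facet to a universal vertex of the whole polytope, while keeping the count of nonsimple vertices under control, is where \cref{lem:basic-excess} must do the genuine work. I expect this propagation step to be the technical heart of the proof.
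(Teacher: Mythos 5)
Your opening reduction is correct, and it is a genuine observation not in the paper: applying \cref{thm:pyramidRec} with $k=d-s$ (where $s$ is the number of nonsimple vertices) and counting degrees does show that a non-pyramid would require $n\ge 2d-s+2$ vertices, so the theorem holds whenever $n\le 2d-s+1$. But from that point on the proposal is a plan rather than a proof, and the two steps you yourself flag as delicate are exactly where the whole content of the theorem lies; neither is resolved, nor would it be resolved along the lines you indicate. First, a facet omitting both a nonsimple vertex $u$ and a prescribed simple nonneighbour $w$ need not exist: two nonadjacent vertices are in general not co-avoided by any facet (in a cross-polytope every facet contains one vertex from each antipodal pair), and nothing in your hypotheses produces such a facet. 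Second, your endgame is circular: under the standing assumption that $P$ has at most $d-1$ nonsimple vertices, exhibiting a vertex adjacent to every other vertex and invoking \cref{cor:pyramidRec} is the same thing as exhibiting the apex of the alleged pyramid, i.e., it is precisely the statement to be proved; the proposal offers no mechanism for extracting such a universal vertex from the pyramidality of a single facet.

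The paper closes this gap with three ideas that are absent from your sketch. After disposing of the cases where some facet has $2d-2$ vertices or contains only simple vertices (the latter via \cref{lem:NoSimpleFaces}), it chooses $F_1$ of \emph{maximum} vertex count among the facets omitting at least one nonsimple vertex; the induction hypothesis makes $F_1$ a pyramid over a ridge $R$ whose apex $u_1$ is, or can be chosen to be, nonsimple in $P$, so that $R$ contains at most $d-3$ nonsimple vertices. Then the \emph{second} facet $F_2$ containing $R$ also omits the nonsimple vertex $u_1$, so maximality forces $|V(F_2)|\le |V(F_1)|=|V(R)|+1$, i.e.\ $F_2$ is also a pyramid over $R$, with apex $u_2$ say. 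Finally, Balinski's theorem \cite{Bal61} performs the propagation you are missing: every simple vertex of $R$ has all its neighbours in $F_1\cup F_2$, so if some vertex $z$ lay outside $F_1\cup F_2$, deleting $u_1$, $u_2$ and the at most $d-3$ nonsimple vertices of $R$ (at most $d-1$ vertices in all) would disconnect $z$ from the simple vertices of $R$ in $G(P)$, a contradiction; hence $V(P)\subseteq V(R)\cup\{u_1,u_2\}$ and $P$ is a $2$-fold pyramid over $R$. The maximal-facet choice, the second facet through the shared ridge, and the Balinski connectivity argument constitute the machinery your ``transfer'' step needs; as written, your proof stops exactly where the difficulty begins.
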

\begin{proof}

We proceed by induction on $d$.  The base case $d=2$ is easily proved: indeed, $P$ must be a triangle.   Now assume that the claim is true for dimensions $2,\ldots,d-1$.

If $P$ has a facet with $2d-2$ vertices, there will only be one vertex outside that facet, which ensures that $P$ is a pyramid. So assume that every facet has at most $2d-3$ vertices.

The case of $P$ having a facet in which every vertex is simple is settled by \cref{lem:NoSimpleFaces}. Henceforth  assume also that every facet has at least one nonsimple vertex.

Amongst all the facets of $P$ which omit at least one nonsimple vertex, choose one, say $F_1$, with a maximum number of vertices. The induction hypothesis ensures that $F_1$ is a pyramid over some ridge $R$, say with apex $u_1$. If $R$ is not a simplex, then  $u_1$ is nonsimple in $F_1$.
If $R$ is a simplex, then so is $F_1$, and we may choose any nonsimple vertex to be its apex $u_1$. Then $R$ contains at most $d-3$ nonsimple vertices, because there are at most $d-2$ nonsimple vertices in $F_1$.  In particular, there are simple vertices in $R$.

Let $F_{2}$ be the other facet  containing $R$. Clearly $F_{2}$ omits the nonsimple vertex $u_1$. By the induction hypothesis and the maximality of $F_{1}$, $F_{2}$  is also a pyramid over $R$. Let $u_2$ denote the apex of $F_{2}$.  Any simple vertex in $R$ has all of its neighbours in $F_1\cup F_2$. Suppose that there is a vertex $z$ outside $F_1\cup F_2$. Then removing  $u_1, u_2$ and the nonsimple vertices in $R$, at most $d-1$ vertices altogether,  would disconnect $z$ from the simple vertices in $R$, violating Balinski's theorem.  This  ensures that every vertex of $P$ lies in  $F_1\cup F_2$.

Since there are only two vertices outside the ridge $R$, $P$ is a 2-fold pyramid over $R$.
\end{proof}

Repeated application gives us the following corollary. The case $k=2$ is essentially known, following from the characterisation of $d$-polytopes with $d+2$ vertices  \cite[Sec.~6.1]{Gru03}.

\begin{proposition}\label{cor:d+k} Suppose $1\le k\le d$, and that $P$ is a $d$-polytope with  $d+k$ vertices, of which  at most $d-1$ are nonsimple. Then $P$ is a $(d-k)$-fold pyramid over a $k$-polytope with $2k$ vertices.
\end{proposition}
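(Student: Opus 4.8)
The plan is to prove the statement by induction on $d-k$, peeling off one pyramidal apex at a time via \cref{thm:pyramid}. Two cases are degenerate and should be disposed of first. When $k=1$ the polytope $P$ has $d+1$ vertices and is therefore a $d$-simplex, which is a $(d-1)$-fold pyramid over a segment (a $1$-polytope with $2$ vertices), so the claim holds trivially. When $k=d$ the assertion merely says that $P$ is a $0$-fold pyramid over itself, a $d$-polytope with $2d$ vertices, which is vacuous. Thus the content lies in the range $2\le k\le d-1$, and I take $d-k$ as the induction parameter, the base case $d-k=0$ being the trivial case $k=d$ just noted. Note that throughout the peeling the quantity (number of vertices) $-$ (dimension) stays equal to $k$, since each step removes one vertex and one dimension; so the same $k$ is carried at every level.

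For the inductive step, assume $k\le d-1$, so that $P$ has $d+k<2d$ vertices and at most $d-1$ nonsimple vertices. Then \cref{thm:pyramid} applies and shows that $P$ is a pyramid, with apex $u$ and basis $Q$, where $Q$ is a $(d-1)$-polytope with $(d+k)-1=(d-1)+k$ vertices. The crucial bookkeeping is to control the nonsimple vertices of $Q$. In a pyramid every edge is either an edge of the basis or joins the apex to a basis vertex, so for each $v\in\ver Q$ one has $\deg_{P}v=\deg_{Q}v+1$; hence $v$ is simple in the $d$-polytope $P$ if and only if it is simple in the $(d-1)$-polytope $Q$. The apex $u$ is adjacent to all $d+k-1$ remaining vertices, so $\deg_{P}u=d+k-1$, which exceeds $d$ precisely because $k\ge 2$; thus $u$ is nonsimple. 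Consequently $Q$ has exactly one fewer nonsimple vertex than $P$, hence at most $d-2=(d-1)-1$ of them.

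Now $Q$ is a $(d-1)$-polytope with $(d-1)+k$ vertices and at most $(d-1)-1$ nonsimple vertices, with $1\le k\le d-1$, so the inductive hypothesis (whose parameter $(d-1)-k=(d-k)-1$ is strictly smaller) yields that $Q$ is a $((d-1)-k)$-fold pyramid over a $k$-polytope $B$ with $2k$ vertices. Since $P$ is a pyramid over $Q$, it is then a $(d-k)$-fold pyramid over the same base $B$, completing the induction. A direct count confirms the endpoint: after the $d-k$ peelings the dimension has dropped from $d$ to $k$ and the number of vertices from $d+k$ to $d+k-(d-k)=2k$, as required.

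I expect the main obstacle to be verifying that the hypotheses of \cref{thm:pyramid} survive each peeling, and this is exactly where the bound of at most $d-1$ nonsimple vertices is used sharply. The argument works only because, for $k\ge 2$, every apex stripped off is itself nonsimple, so removing it lowers both the dimension and the nonsimple-vertex count by one in lockstep, preserving the inequality ``nonsimple vertices $\le$ dimension $-1$'' at every level. Simultaneously, keeping $k$ fixed while the dimension decreases guarantees that the vertex count $(d-j)+k$ stays strictly below twice the dimension $2(d-j)$ for all $j<d-k$, which is precisely the hypothesis $k<d-j$ needed to reapply \cref{thm:pyramid}.
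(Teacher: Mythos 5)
Your proof is correct and matches the paper's approach: the paper derives \cref{cor:d+k} precisely by "repeated application" of \cref{thm:pyramid}, which is exactly your induction on $d-k$. Your write-up also supplies the key bookkeeping the paper leaves implicit --- that for $k\ge 2$ each stripped apex is itself nonsimple, so the bound "nonsimple vertices $\le$ dimension $-1$" is preserved at every peeling step.
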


This begs the question of  $d$-polytopes with $2d$ vertices.
The next result covers that case, and is in the same spirit as \cref{cor:d+k}.

\begin{proposition}
\label{prop:d+d} Let $P$ be a $d$-polytope with $2d$ vertices and at most $d-2$ nonsimple vertices. Then $P$ is either a simplicial $d$-prism or a pyramid.

Furthermore, this is best possible as there are 4-polytopes with eight vertices and three nonsimple vertices which are neither simplicial 4-prisms nor pyramids (namely ~Polytopes $Q_4^1$ and $Q_4^2$).
\end{proposition}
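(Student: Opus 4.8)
The plan is to prove the structural assertion by induction on $d$, the ``best possible'' half being witnessed directly by the polytopes $Q^1_4,Q^2_4$ of \cref{rmk:polytopes-Q_{q}}. The argument follows the template of \cref{thm:pyramid}, and the only genuinely new work occurs when a maximal facet turns out to be a simplicial prism rather than a pyramid. I first dispose of two reductions. If some facet has $2d-1$ vertices, then only one vertex lies outside it and $P$ is a pyramid, so I may assume every facet has at most $2d-2$ vertices. If some facet contains only simple vertices, \cref{lem:NoSimpleFaces} already yields that $P$ is a simplicial prism or a pyramid, so I may assume every facet carries at least one nonsimple vertex; in particular $P$ itself has a nonsimple vertex, since otherwise it is simple and \cref{lem:simplePolytopes} finishes the proof.

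Among all facets omitting at least one nonsimple vertex (such facets exist) I choose $F_1$ with the most vertices. Because $F_1$ omits a nonsimple vertex, and simplicity in $P$ is inherited by every facet, $F_1$ has at most $d-3$ nonsimple vertices, while $|V(F_1)|\le 2d-2$ by the first reduction. Applying the induction hypothesis to the $(d-1)$-polytope $F_1$ --- through \cref{thm:pyramid} when $|V(F_1)|<2(d-1)$, and through the inductive form of \cref{prop:d+d} when $|V(F_1)|=2(d-1)$ --- shows that $F_1$ is either a pyramid over a ridge $R$ or a simplicial $(d-1)$-prism. In the pyramidal case I copy \cref{thm:pyramid}: writing $u_1$ for the apex (which is then nonsimple in $P$), maximality forces the second facet $F_2$ through $R$ to satisfy $V(F_2)=V(R)\cup\{u_2\}$, so that $F_1\cup F_2=R\cup\{u_1,u_2\}$. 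Since every simple vertex of $R$ has all its neighbours in $R\cup\{u_1,u_2\}$, any vertex outside $F_1\cup F_2$ would be separated from those simple vertices by deleting $u_1,u_2$ together with the at most $d-3$ nonsimple vertices of $R$; as this is at most $d-1$ deletions, Balinski's theorem forbids it, whence $V(P)=R\cup\{u_1,u_2\}$. Then $|V(R)|=2d-2$ and $|V(F_1)|=2d-1$, contradicting the first reduction. Therefore $F_1$ must be a simplicial $(d-1)$-prism, with exactly two vertices $a,b$ lying outside it.

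The crux is this prism case. Write $F_1=\Delta_{d-2}\times I$ with simplex bases $B_0,B_1$ and lateral facets $L_1,\dots,L_{d-1}$, each a $(d-2)$-dimensional ridge of $P$ whose partner facet has vertices in $V(F_1)\cup\{a,b\}$. Every vertex of $F_1$ has degree $d-1$ there, hence is adjacent to $a$ or $b$; the vertices adjacent to both are exactly the nonsimple vertices of $F_1$, of which there are at least one (second reduction) and at most $d-3$. I would track the partner facets of the ridges $B_0,B_1,L_j$: when such a partner has a single apex $a$ or $b$, that apex is adjacent to every vertex of the ridge, hence to an entire base or to $d-2$ vertices of each base. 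A short count then shows that unless $a,b$ ``split'' the two bases as in a genuine prism (so $P$ is simple, already excluded) or one of them is adjacent to all of $F_1$ (so $P$ would be forced to carry at least $d-1$ nonsimple vertices, impossible), the number $s$ of vertices adjacent to both $a$ and $b$ is pushed up to at least $d-2$, overrunning the budget $d-3$. The identity $\xi(P)=2s+2\varepsilon-2$, where $\varepsilon\in\{0,1\}$ records whether $ab$ is an edge, together with \cref{lem:basic-excess}, keeps this bookkeeping tight.

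The main obstacle is precisely this last case, and within it the subcase in which a ridge-partner facet contains \emph{both} $a$ and $b$, so that $a,b$ are affinely dependent with that ridge and the clean ``apex over the ridge'' conclusion fails. Here I expect to need a separate geometric argument --- bounding how many of the laterals $L_j$ can simultaneously place $a,b$ on a common hyperplane, or using \cref{lem:basic-excess}(i)--(ii) to confine the nonsimple vertices of $F_1$ to a single non-ridge face of the prism --- in order to exclude it. Once every configuration is shown to collapse to the prism or the pyramid already handled, the induction closes, and the final sentence of the statement is realised by the pair $Q^1_4,Q^2_4$.
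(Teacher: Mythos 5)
Your reductions, the choice of $F_1$, and the pyramidal case are all sound (there you could equally have stopped at ``$P$ is a $2$-fold pyramid over $R$, hence a pyramid'', as the paper does, instead of deriving a contradiction with the first reduction; both are legitimate). The genuine gap is exactly where you locate it: the prism case is never closed. The proposal substitutes ``a short count then shows\dots'' for an actual argument, and ends by admitting that a ``separate geometric argument'' is still needed for the subcase where a ridge-partner facet contains both $a$ and $b$. Nothing you write excludes that subcase, so the induction does not close, and the elaborate bookkeeping (bases $B_0,B_1$, laterals $L_j$, the identity $\xi(P)=2s+2\varepsilon-2$) is machinery pointed in the wrong direction.

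The missing idea is that the prism case dies in three lines from \cref{lem:basic-excess}, with no analysis of how $a$ and $b$ attach to the bases. By your second reduction, $F_1$ contains a vertex $v$ that is nonsimple in $P$; since a simplicial prism is a simple polytope, $v$ is simple in $F_1$, so \cref{lem:basic-excess}(i) gives a facet $J$ containing $v$ with $F_1\cap J$ not a ridge. Now the fact you never exploit does all the work: $J$ is a $(d-1)$-polytope, hence has at least $d$ vertices, and only the two vertices $a,b$ of $P$ lie outside $F_1$, so $F_1\cap J$ contains at least $d-2$ vertices. By \cref{lem:basic-excess}(ii) every vertex of $F_1\cap J$ is nonsimple in $P$, so $F_1$ already contains at least $d-2$ nonsimple vertices, i.e.\ all of them --- contradicting the fact that $F_1$ omits a nonsimple vertex. (Note the paper's case structure is inverted relative to yours: there the prism case is the impossible one, and the pyramid case is what delivers the conclusion.) Your parenthetical suggestion of ``using \cref{lem:basic-excess}(i)--(ii) to confine the nonsimple vertices of $F_1$'' is in fact the entire proof of the prism case, not a tool for one stubborn subcase of it.
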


\begin{proof} The  idea used for \cref{thm:pyramid} also proves this proposition, but we give the full details. As in the proof of \cref{thm:pyramid}, we proceed by induction on $d$.  In the  base case $d=2$ we have that $P$ is a quadrilateral, which is a simplicial prism in two dimensions.   Now assume that the claim is true for dimensions $2,3,\ldots,d-1$.

If a facet of $P$ has $2d-1$ vertices, then $P$ is clearly a pyramid. If $P$ is a simple polytope, then it is a simplicial $d$-prism by \cref{lem:simplePolytopes}. We may henceforth assume that every facet has at most $2(d-1)$ vertices, and that some vertices are not simple.

The case when some facet contains only simple vertices is again taken care of by \cref{lem:NoSimpleFaces}. This leaves us with the case that every facet has at least one nonsimple vertex.

There are facets which omit at least one nonsimple vertex. Amongst all such facets, choose one, say $F_1$, with the maximal number of vertices. Then $F_1$ has at most $2(d-1)$ vertices, of which at most $(d-2)-1$ are nonsimple. Then \cref{thm:pyramid} and the induction hypothesis together ensure that $F_1$ is either a  prism or a pyramid.

Suppose $F_1$ is a prism,  and denote by $v$ a vertex in $F_1$ which is not simple in $P$. Clearly every vertex in $F_1$ is simple in $F_1$. Then ~\cref{lem:basic-excess}(i) ensures that there is another facet containing $v$, say $J$, which does not intersect $F_1$ in a ridge. There are only two vertices outside $F_1$, so  $J$ must intersect $F_1$  in a subridge. But then every vertex in  $F_1\cap J$  will be nonsimple in $P$, and so $F_1\cap J$ would contain at least $d-2$ nonsimple vertices. In particular, $F_1$ contains every nonsimple vertex in $P$. This being contrary to the hypothesis that $F_1$ omits a nonsimple vertex, we conclude that $F_1$ is  a pyramid.

We claim that the apex $u_1$ of this pyramid is, or can be chosen to be, nonsimple in $P$. If $F_{1}$  is a simplex, we choose $u_{1}$  to be a nonsimple vertex of $P$ in $F_1$, and define $R$ as the convex hull of the other vertices. Then $R$ is a ridge, and $F_1$ is a pyramid over $R$, whose apex $u_1$ is nonsimple. If $F_1$ is not a simplex, we recall that it is a pyramid over some base $R$, necessarily a ridge, say with apex $u_1$. Since  $R$ will not be a simplex in this case, $u_{1}$ is automatically nonsimple in $F_{1}$, and thus in $P$. Let $F_2$ denote the other facet  containing  $R$.

Then $F_{2}$ omits the nonsimple vertex $u_{1}$, so by maximality, it is also a  pyramid over $R$, say with apex $u_2$. Consequently, if there were a vertex outside $F_{1}\cup F_{2}$, then removing the vertices $u_1, u_2$ and the nonsimple vertices in $R$, at most $d-2$ vertices altogether,  would disconnect the graph of $P$, contradicting Balinski's theorem. Hence there are no vertices outside $F_{1}\cup F_{2}$ and again $P$ is a 2-fold pyramid over $R$. \end{proof}

\cref{prop:d+d} gives a new proof of Gr\"unbaum's result that there is no 4-polytope with eight vertices and 17 edges; see \cite[Thm. 10.4.2, p. 193]{Gru03}. Indeed, such a 4-polytope must have at most two nonsimple vertices, in which case the polytope would be a pyramid. But this is impossible, as the base would have seven vertices and only ten edges.

It seems unlikely that there is any  extension of these results to more than $2d$ vertices. The following question might seem to be natural:
\begin{quote} Must every $d$-polytope with $2d+1$ vertices, of which at most $d-2$ are nonsimple, be either a pentasm or pyramid?
\end{quote}
A $d$-dimensional  {\it pentasm} is the Minkowski sum of a $d$-simplex and a line segment which is parallel to one triangular face, but not parallel to any edge, of the simplex; or any polytope combinatorially equivalent to it. Pentasms were first defined in \cite{PinUgoYos15} and studied further in \cite{PinUgoYos16a}; the graph in \cref{fig:3poly7vert2ns}(b) is that of a 3-dimensional pentasm.

However even this modest question has a negative answer. A counterexample when $d=3$ is given by the graph in \cref{fig:3poly7vert2ns}(a). Two counterexamples in four dimensions are the duals of the polytopes whose Gale diagrams are $10^{th}$ and $17^{th}$ in the list \cite[Fig. 6.3.4]{Gru03}, which are depicted  in \cref{fig:Gale-Close-To-Simple} in that order.

\begin{figure}
\begin{center}
\includegraphics{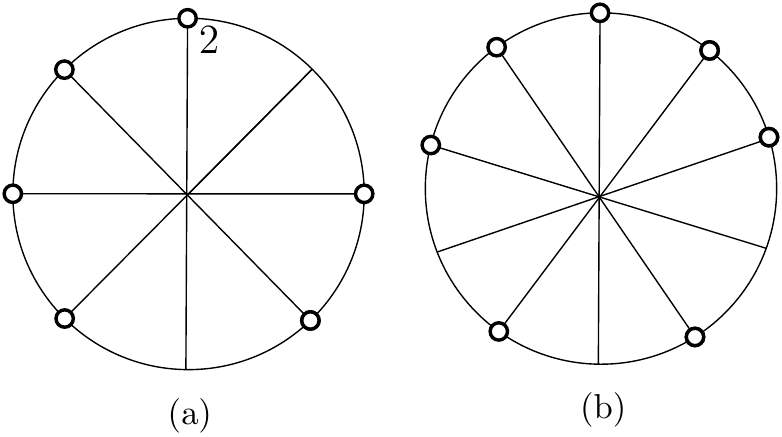}
\end{center}
\caption{Gale diagrams of two nonpyramidal 4-polytopes whose duals have nine vertices, at most two nonsimple vertices and are neither a pentasm or a pyramid.}\label{fig:Gale-Close-To-Simple}
\end{figure}

\section{Reconstruction: Polytopes with a small number of vertices}

Now we apply the preceding work to obtain structural and reconstruction results for polytopes with  less than $2d$ vertices, most of which are simple.
	
\begin{theorem}\label{thm:d+kVert}
Let $k\ge 5$ and let $P$ be a $d$-polytope with $d+k$ vertices, of which at most $d-k+3$  are nonsimple. Then the graph of $P$ determines the entire combinatorial structure of $P$. For the particular case of $k=5$, this conclusion is best possible as shown by the pair of polytopes $Q^1_5$  and $Q^2_5$.
\end{theorem}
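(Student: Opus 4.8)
The plan is to use the pyramidal structure to strip $P$ down to a $k$-polytope with exactly $2k$ vertices and at most three nonsimple vertices, and then to reconstruct that reduced base using the prism-or-pyramid dichotomy. First I would dispose of the degenerate ranges of $k$. If $k>d+3$ the bound $d-k+3$ is negative, so no such polytope exists; if $k=d+3$ then $P$ is simple and hence reconstructible; and if $k\in\{d+1,d+2\}$ then $P$ has at most two nonsimple vertices, so \cref{thm:fourpointfive} applies. Thus I may assume $5\le k\le d$.

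Since $d-k+3\le d-1$ (as $k\ge4$), \cref{cor:d+k} applies and exhibits $P$ as a $(d-k)$-fold pyramid over a $k$-polytope $Q$ with exactly $2k$ vertices. I would then track nonsimple vertices through this pyramid tower. Regarding $P$ as the join of $Q$ with the simplex on the $d-k$ apexes, every apex is adjacent to all $d+k-1$ other vertices, hence has degree $d+k-1>d$ and is nonsimple; while a base vertex $v\in Q$ satisfies $\deg_P v=\deg_Q v+(d-k)$, so $v$ is simple in $P$ precisely when it is simple in $Q$. Consequently the nonsimple vertices of $P$ are exactly the $d-k$ apexes together with the nonsimple vertices of $Q$, and the hypothesis forces $Q$ to have at most three nonsimple vertices. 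By the reconstruction clause of \cref{thm:pyramidRec}, applied once per apex down the tower, $P$ is reconstructible from its graph if and only if $Q$ is, so it remains to reconstruct $Q$.

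To reconstruct $Q$, a $k$-polytope with $2k$ vertices and at most three nonsimple vertices, I would split on the exact count. If $Q$ has at most two nonsimple vertices, \cref{thm:fourpointfive} finishes immediately. The crucial case is exactly three nonsimple vertices, where I invoke \cref{prop:d+d}, whose hypothesis ``at most $k-2$ nonsimple vertices'' is met precisely because $3\le k-2$, that is, because $k\ge5$. This is the one place where the threshold $k\ge5$ is genuinely used, and it is exactly what licenses the step from two to three admissible nonsimple vertices on the base. \cref{prop:d+d} then forces $Q$ to be a simplicial $k$-prism or a pyramid; a simplicial prism is simple and so cannot carry three nonsimple vertices, leaving the pyramid case. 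In that pyramid the apex is nonsimple (its degree equals $2k-1>k$), so the base is a $(k-1)$-polytope with exactly two nonsimple vertices, which \cref{thm:fourpointfive} reconstructs; reversing one further application of \cref{thm:pyramidRec} reconstructs $Q$, and hence $P$.

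I expect the main obstacle to be the bookkeeping guaranteeing that the reduced base $Q$ carries at most three nonsimple vertices together with the verification that the reconstruction truly reduces along the pyramid tower; in particular, one must check that the apexes are recognisable from the graph (they are the vertices adjacent to all others) so that any polytope sharing the graph of $P$ inherits the same $(d-k)$-fold pyramidal decomposition over a base combinatorially forced to agree with $Q$. Finally, for the sharpness assertion with $k=5$ (so $d=5$ and $d-k+3=3$), the polytopes $Q^1_5$ and $Q^2_5$ of \cref{rmk:polytopes-Q_{q}} share a common graph yet are inequivalent and each have $d-1=4$ nonsimple vertices, showing that ``at most three'' cannot be relaxed to ``at most four.''
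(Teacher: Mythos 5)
Your proof is correct and follows essentially the same route as the paper's: reduce via \cref{cor:d+k} to a $k$-polytope $Q$ with $2k$ vertices and at most three nonsimple vertices, then apply \cref{prop:d+d} (noting $3\le k-2$ precisely when $k\ge5$) to identify $Q$ as a simplicial prism or a pyramid over a base with at most two nonsimple vertices, finishing with \cref{thm:pyramidRec} and \cref{thm:fourpointfive}. The only differences are cosmetic: you handle the degenerate ranges $k>d$ and the case $k=d$ with more explicit case-splitting and degree bookkeeping than the paper does.
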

	
\begin{proof} If $d< k$ then the polytope is reconstructible from its graph by \cref{thm:fourpointfive}, and the results in \cite{BliMan87,Kal88}. In the case of $d=k$ and $P$ being nonsimple, since $d-k+3\le d-2$, \cref{prop:d+d} gives that $P$ is a pyramid, and the reconstruction follows from  \cref{thm:pyramidRec}  and \cref{thm:fourpointfive} since the basis of the pyramid would have at most two nonsimple vertices. So assume $d\ge k+1$. From \cref{cor:d+k} it ensues that $P$ is a $(d-k)$-fold pyramid over some $k$-polytope $Q$ with $2k$ vertices. Since there are at most  $d-k+3$  nonsimple vertices in $P$, $Q$ has at most three nonsimple vertices. By \cref{thm:pyramidRec} the reconstruction statement now reduces to proving that $Q$ is reconstructible from its graph. By \cref{prop:d+d}, $Q$ is either a simplicial $k$-prism, which  is clearly reconstructible, or else a pyramid over a $(k-1)$-polytope with $2k-1$ vertices, at most two of which are nonsimple, in which case $Q$ is also reconstructible  by combining \cref{thm:pyramidRec} and \cref{thm:fourpointfive}.
\end{proof}

The preceding theorem is not valid for polytopes with $d+4$ or fewer vertices. However reconstructibility holds under  a slightly stronger hypothesis about the number of nonsimple vertices.

\begin{figure}
\begin{center}
\includegraphics{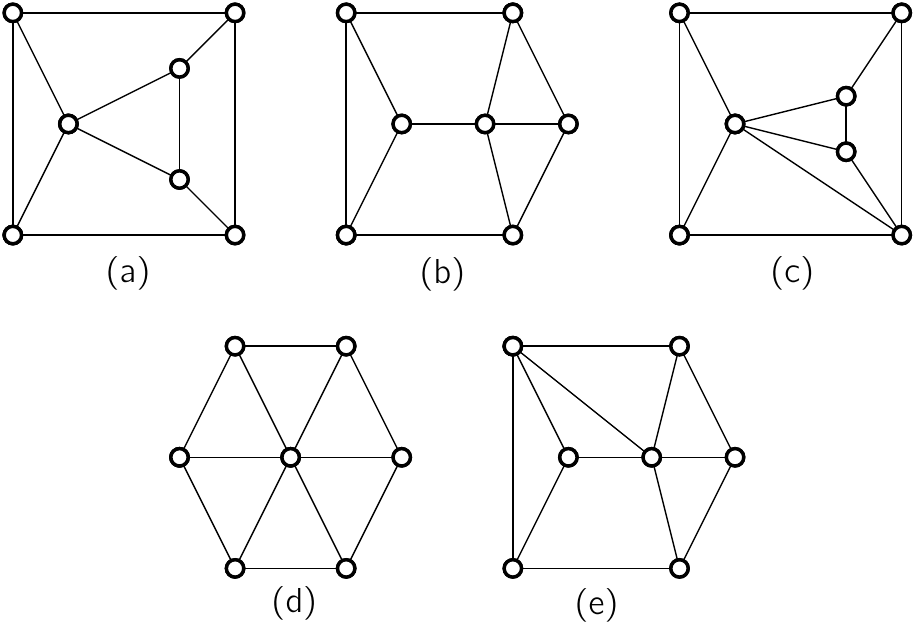}
\end{center}
\caption{Graphs of the 3-polytopes with seven vertices and at most two nonsimple vertices (cf.~\cite[Fig.~4]{BriDun73}).}
\label{fig:3poly7vert2ns}
\end{figure}

\begin{theorem}\label{thm:dPlus4VertRec} Let $P$ be a $d$-polytope with $d+4$ vertices, of which at most $d-1$ are nonsimple. If  $P$ has at most $d-2$ nonsimple vertices, then the graph of $P$ determines its entire combinatorial structure. Furthermore, in the case of $P$ having  $d-1$ nonsimple vertices, the polytope is still reconstructible from its 2-skeleton.

More precisely, for $d=3$ there are exactly five such polytopes (see ~\cref{fig:3poly7vert2ns}). For every $d\ge4$ there are exactly nine such polytopes, namely a $(d-4)$-fold pyramid over either a simplicial 4-prism one of the three polytopes in ~\cref{fig:dpoly2dvertd-1ns} (b-d), or a $(d-3)$-fold pyramid over one of the five 3-polytopes in \cref{fig:3poly7vert2ns}.

These results are best possible, in the sense that there are  nonpyramidal 4-polytopes with eight vertices and three nonsimple vertices which are not reconstructible from their graphs,  namely the ~polytopes $Q^1_4$ and $Q^2_4$ (cf.~\cref{rmk:polytopes-Q_{q}}).
\end{theorem}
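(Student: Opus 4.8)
The plan is to reduce everything to dimension four. Suppose first that $d\ge4$. Since $P$ has $d+4$ vertices and at most $d-1$ nonsimple vertices, \cref{cor:d+k} (with $k=4$) shows that $P$ is a $(d-4)$-fold pyramid over some $4$-polytope $Q$ with $2\cdot4=8$ vertices. I would first record that each of the $d-4$ successive apices is nonsimple: the $i$-th apex lies over an $(i-1)$-fold pyramid over $Q$, which is a $(3+i)$-polytope with $7+i$ vertices and hence never a simplex, so that apex has degree exceeding the current dimension. As passing to a pyramid preserves the simplicity status of the basis vertices, $P$ has exactly $t+(d-4)$ nonsimple vertices, where $t$ denotes the number of nonsimple vertices of $Q$. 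Thus $t\le3$, with $t\le2$ precisely when $P$ has at most $d-2$ nonsimple vertices and $t=3$ precisely when $P$ has exactly $d-1$. Iterating the pyramid clause of \cref{thm:pyramidRec}, reconstructibility of $P$ from its $j$-skeleton is equivalent to that of $Q$, so it suffices to treat $Q$.

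Next I would settle the reconstruction of $Q$. If $t\le2$, then $Q$ is reconstructible from its graph: by \cref{thm:fourpointfive} when it has one or two nonsimple vertices, and because it is a simplicial $4$-prism (by \cref{lem:simplePolytopes}) reconstructible from its graph \cite{BliMan87,Kal88} when it is simple; hence $P$ too is reconstructible from its graph, giving the first assertion. In the remaining case $t=3$ (so $P$ has $d-1$ nonsimple vertices) I would simply invoke Gr\"unbaum's theorem that every $4$-polytope is reconstructible from its $2$-skeleton \cite[Thm.~12.3.1]{Gru03}; this yields reconstructibility of $Q$, and hence of $P$, from the $2$-skeleton, which is the second assertion. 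The case $d=3$ is handled directly, since $3$-polytopes are reconstructible from their graphs (as recalled in \S1): every $3$-polytope with seven vertices and at most two nonsimple vertices is reconstructible from its graph, a fortiori from its $2$-skeleton.

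It then remains to prove the explicit enumeration. For $d=3$ this is the assertion that there are exactly five $3$-polytopes with seven vertices and at most two nonsimple vertices, which I would extract from the known catalogue \cite{BriDun73} (\cref{fig:3poly7vert2ns}). For $d\ge4$ the classification reduces, via the pyramid correspondence above, to classifying the $4$-polytopes $Q$ with eight vertices and at most three nonsimple vertices. When $t\le2$, \cref{prop:d+d} forces $Q$ to be either a simplicial $4$-prism or a pyramid over a $3$-polytope $B$ with seven vertices and at most two nonsimple vertices, i.e.\ one of the five from \cref{fig:3poly7vert2ns}. Folding the $(d-4)$-fold pyramid over $\pyr(B)$ into a $(d-3)$-fold pyramid over $B$ yields the five pyramidal families, while the prism gives one further family.

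The main obstacle is the case $t=3$: identifying all $4$-polytopes with eight vertices and exactly three nonsimple vertices. Here I would argue as in the proof of \cref{prop:d+d}: dispose of the case of a facet with $2\cdot4-1=7$ vertices (then $Q$ is a pyramid over a $3$-polytope with seven vertices) and of the case of a facet with only simple vertices (\cref{lem:NoSimpleFaces} yields a prism or such a pyramid), and then analyse a facet $F_1$ of maximal size among those omitting a nonsimple vertex. The difficulty, and the reason the clean dichotomy of \cref{prop:d+d} breaks down here, is that $F_1$ may now be a $3$-polytope with six vertices and two nonsimple vertices, to which neither \cref{thm:pyramid} nor \cref{prop:d+d} applies; this slack is exactly what produces the three nonpyramidal exceptions $Q_4^1$, $Q_4^2$ and the pulled prism of \cref{fig:dpoly2dvertd-1ns}(b--d). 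I expect the cleanest route, matching the authors' reported use of \texttt{polymake}, is to confirm by a finite structural case analysis (or directly from the enumeration of eight-vertex $4$-polytopes in \cite{FukMiyMor13a,Gru03}, filtered by degree sequence) that these three are the only nonpyramidal possibilities. Together with the simplicial prism and the five pyramids over the $3$-polytopes of \cref{fig:3poly7vert2ns}, this gives exactly nine polytopes for each $d\ge4$. Finally, best-possibility follows because $Q_4^1$ and $Q_4^2$ share a graph yet differ (eight versus seven facets, cf.\ \cref{rmk:polytopes-Q_{q}}), so they are not reconstructible from their graphs despite having eight vertices and three nonsimple vertices.
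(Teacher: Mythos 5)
Your proposal is correct and follows essentially the same route as the paper's proof: reduce via \cref{cor:d+k} and the pyramid clause of \cref{thm:pyramidRec} to a $4$-polytope $Q$ with eight vertices, handle the case of at most two nonsimple vertices via \cref{thm:fourpointfive} and \cite{BliMan87,Kal88}, invoke Gr\"unbaum's theorem \cite[Thm.~12.3.1]{Gru03} for the case of exactly three nonsimple vertices, and settle the enumeration by \cref{prop:d+d} together with the catalogue \cite{FukMiyMor13a} of eight-vertex $4$-polytopes. The only differences are presentational (your explicit count of the nonsimple apices, which the paper leaves implicit), and your fallback on the catalogue for the exactly-three-nonsimple enumeration is precisely what the paper does.
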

	
\begin{proof} If $d\le 3$, $P$ is of course reconstructible from its graph. For $d=3$ the 3-polytopes with seven vertices and at most two nonsimple vertices can be found in \cite[Fig.~4]{BriDun73} or \cref{fig:3poly7vert2ns}.

Graph reconstructibility also holds for simple polytopes. So suppose $P$ is not  a simple polytope and $d\ge4$. Then by \cref{cor:d+k}, $P$ is a $(d-4)$-fold pyramid over a 4-polytope $Q$ with eight vertices.  By \cref{thm:pyramidRec} the reconstruction statement now reduces to proving that $Q$ is reconstructible from its graph, and  this follows from \cref{thm:fourpointfive} and \cite{BliMan87,Kal88}, in case $P$ has at most $d-2$ nonsimple vertices.

For $d\ge 4$, once we have that the polytope is a $(d-4)$-fold pyramid over a 4-polytope with eight vertices and at most three nonsimple vertices, we first look at the  catalogues \cite{FukMiyMor13a} of 4-polytopes with eight vertices to find those with exactly three nonsimple vertices, all of which are depicted in \cref{fig:dpoly2dvertd-1ns}(b-d). Thanks to \cref{prop:d+d}, a 4-polytope with eight vertices and at most two nonsimple vertices is either a simplicial 4-prism or a pyramid over a 3-polytope with seven vertices and at most two nonsimple vertices (see ~\cref{fig:3poly7vert2ns});  and in all these cases it is reconstructible from its graph.

Finally, note that reconstructing from the 2-skeleton reduces to showing that a 4-polytope with eight vertices and exactly three nonsimple vertices is reconstructible from its 2-skeleton, which is a very special case of \cite[Thm.~12.3.1]{Gru03}.
\end{proof}

Finally, we come to the case of $d+3$ vertices. Our original proof of the next result used Gale diagrams, but the following argument seems to be neater.

\begin{theorem}\label{thm:dPlus3VertRec}	Let $P$ be a $d$-polytope with $d+3$ vertices, of which at most $d-1$ are nonsimple. Then the polytope is either a  $(d-3)$-fold  pyramid over  a simplicial 3-prism,  a $(d-3)$-fold  pyramid over $Q^1_3$ (\cref{fig:dpoly2dvertd-1ns}(a)),  or a $(d-2)$-fold pyramid over a pentagon. As a consequence, the graph of $P$ determines its entire combinatorial structure.

These results are best possible in the sense that for any $d\ge4$, there are nonpyramidal  $d$-polytopes with $d+3$ vertices and exactly $d$ nonsimple vertices which are not reconstructible from their graphs.
\end{theorem}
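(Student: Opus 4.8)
The plan is to reduce the whole statement to three dimensions by exploiting the pyramid structure already established. First I would apply \cref{cor:d+k} with $k=3$: since $P$ has $d+3$ vertices and at most $d-1$ nonsimple vertices, it is a $(d-3)$-fold pyramid over some $3$-polytope $Q$ with six vertices. (The cases $d\le 3$ are immediate, a $2$-polytope with five vertices being a pentagon and $3$-polytopes being graph-reconstructible.) The next step is to control the nonsimple vertices of $Q$. Taking a pyramid preserves the excess degree of every base vertex, since adjoining the apex raises both the degree and the dimension by one; hence a vertex of $Q$ is nonsimple in $P$ exactly when it is nonsimple in $Q$. Moreover each of the $d-3$ successive apexes is nonsimple, because at every stage the base, an iterated pyramid over $Q$, has more than $\dim+1$ vertices and so is not a simplex. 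Therefore $P$ has precisely $d-3$ more nonsimple vertices than $Q$, and the hypothesis forces $Q$ to have at most two nonsimple vertices.

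The heart of the argument is then the classification of $3$-polytopes with six vertices and at most two nonsimple vertices. I would argue by Euler's formula and degree counting, with $n\in\{0,1,2\}$ the number of nonsimple vertices. If $n=0$ the polytope is simple, so \cref{lem:simplePolytopes} identifies it as the simplicial $3$-prism. If $n=1$, parity of the degree sum together with the bound $\deg\le f_0-1=5$ forces the unique nonsimple vertex to have degree $5$, hence to be adjacent to every other vertex; \cref{cor:pyramidRec} then makes $P$ a pyramid over a pentagon. If $n=2$, the same reasoning rules out degree $5$ (a degree-$5$ vertex would make $P$ a pentagonal pyramid whose only nonsimple vertex is the apex), so both nonsimple vertices have degree $4$ and the four simple vertices have degree $3$. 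A short case analysis on the adjacencies of the two degree-$4$ vertices, using $3$-connectivity (Balinski) to discard the configurations whose graphs are disconnected by two vertices or contain $K_{3,3}$, leaves exactly one graph, that of the tetragonal antiwedge $Q^1_3$. Reassembling, the three base cases give precisely the three stated shapes, the pentagonal-pyramid case becoming a $(d-2)$-fold pyramid over a pentagon after the remaining $d-3$ pyramidal layers.

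For the reconstruction conclusion I would apply \cref{thm:pyramidRec} repeatedly: in each case $P$ is an iterated pyramid, so it is reconstructible from its graph if and only if its ultimate basis is, and that basis is a $3$-polytope (or a pentagon), which is reconstructible from its graph by the classical results for polytopes of dimension at most three cited in the introduction. This recovers the entire combinatorial structure of $P$.

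It remains to prove sharpness. For $d=4$ the four polytopes of \cref{tab:4Polytopes7Vertices4Nonsimple} are nonpyramidal, have seven vertices and four nonsimple vertices, and share a common graph, so none is reconstructible from its graph. For general $d\ge 4$ I would obtain nonpyramidal examples by a dimension-raising construction applied to this pair, a one-point suspension, which adds one vertex and one dimension while preserving nonpyramidality, arranged so that both members are transformed identically and the nonsimple count rises by one at each step; alternatively one builds the examples directly from their planar Gale diagrams. I expect this last point to be the main obstacle: the reduction and the base classification are essentially forced by \cref{cor:d+k}, \cref{lem:simplePolytopes} and \cref{cor:pyramidRec}, whereas checking that a dimension-raising operation preserves the shared graph, keeps the polytope nonpyramidal, and produces exactly $d$ nonsimple vertices in every dimension requires genuine care. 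Note that the easy candidate, a $(d-4)$-fold pyramid over a table example, already yields a non-reconstructible $d$-polytope with $d+3$ vertices and $d$ nonsimple vertices, but it is pyramidal and so fails the stated nonpyramidality, which is exactly why a separate construction is needed.
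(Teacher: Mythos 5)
Your main argument coincides with the paper's: both reduce via \cref{cor:d+k} to a $3$-polytope $Q$ with six vertices, note that pyramids preserve the excess of base vertices while every successive apex is nonsimple (so $Q$ has at most two nonsimple vertices), identify the three possible bases, and conclude reconstructibility by iterating \cref{thm:pyramidRec}. The one difference in the body is that the paper identifies the bases by citing the catalogues of Britton--Dunitz and Gr\"unbaum, whereas you re-derive the classification by hand: \cref{lem:simplePolytopes} for the simple case, parity of the degree sum plus \cref{cor:pyramidRec} to force degree $5$ when there is one nonsimple vertex and degrees $(4,4)$ when there are two, and a short case analysis (the $K_{3,3}$-plus-an-edge configuration and the configuration with a $2$-cut being excluded) leaving only the antiwedge graph. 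That analysis is correct and complete, and makes the proof self-contained; either route is fine.

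The genuine divergence is in the sharpness claim for $d\ge 5$. The paper's own proof takes multifold pyramids over the polytopes of \cref{tab:4Polytopes7Vertices4Nonsimple}; as you point out, those are pyramidal, so they witness non-reconstructibility but not the stated nonpyramidality --- you have identified a real defect in the paper's argument, not merely a difficulty of your own. Your one-point-suspension idea does repair it, but only if you suspend at the right vertex: suspending at a vertex of degree $4$ or $5$ raises the excess of each of its non-neighbours and yields too many nonsimple vertices, whereas suspending at a vertex adjacent to all others (each table polytope has two, of degree $6$) preserves the excess of every surviving vertex, removes one nonsimple vertex and adds two new ones of excess $2$, so dimension, vertex count and nonsimple count each grow by exactly one, giving $d+3$ vertices and exactly $d$ nonsimple vertices. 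Because the suspended vertex is universal, the resulting graph is the common graph minus that vertex plus two universal vertices, hence again common to all four polytopes; nonpyramidality is preserved (a vertex is an apex precisely when exactly one facet omits it, and no vertex of the suspension has that property when the original polytope is nonpyramidal); and the construction iterates since the new vertices are again universal. What remains is to check that the suspensions stay pairwise inequivalent --- for instance via facet counts, or because the vertex figure at a new vertex recovers the original polytope --- so your instinct that this step is where the genuine care is needed is exactly right.
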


\begin{proof} Let $P$ be a $d$-polytope with $d+3$ vertices and  at most $d-1$ nonsimple vertices. If $P$ is simple, then either $d=3$ and $P$ is a simplicial prism, or $d=2$ and $P$ is a pentagon.
So suppose $P$ is not a simple polytope. \cref{cor:d+k} gives that $P$ is a $(d-3)$-fold pyramid over a 3-polytope, which must be reconstructible from its graph. Hence  $P$ is reconstructible by repeated application of \cref{thm:pyramidRec}.

Once we know that $P$ is a $(d-3)$-fold pyramid, we can obtain all such polytopes simply by looking for 3-polytopes with six vertices, at most two of which are nonsimple. They are the simplicial 3-prism, $Q^1_3$ (\cref{fig:dpoly2dvertd-1ns}(a)), and the pyramid over a pentagon, see \cite[Fig.~3]{BriDun73} or \cite[Fig.~6.3.1]{Gru03}.

For examples of nonpyramidal 4-polytopes with $d+3$ vertices and $d$ nonsimple, see \cref{tab:4Polytopes7Vertices4Nonsimple}. Constructing multifold pyramids over these gives higher dimensional examples.
\end{proof}

In fact, there are $3d-8$ distinct combinatorial types of $d$-polytopes with $d$ nonsimple vertices and three simple vertices, and they have the same graph, namely the complete graph on $d+3$ vertices with a path of length four removed. However, for any $d$, there is a $d$-polytope with $d+1$ nonsimple vertices and two simple vertices which is reconstructible from its graph. We will study this in more detail elsewhere.

\section{Polytopes with small excess} Recall that the excess $\xi$ of a $d$-polytope $P$ is $\xi(P)=\sum_{u\in \ver P} (\deg u-d)$. Polytopes with small excess $\xi\le d-1$ were first studied in \cite{PinUgoYos16a}, where the excess theorem was established.

\begin{theorem}[Excess theorem, {\cite[Thm.~3.3]{PinUgoYos16a}}]\label{thm:excess} The smallest values of the excess of a $d$-polytope are $0$ and $d-2$.
\end{theorem}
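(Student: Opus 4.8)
```latex
\textbf{Proof proposal.}
The statement to prove is the Excess Theorem: the smallest possible values
of the excess $\xi(P)$ of a $d$-polytope are $0$ and $d-2$. Note that the
excess is always a nonnegative integer, and it is $0$ precisely when $P$ is
simple, so the nontrivial content is that there is a gap: \emph{no}
$d$-polytope has excess in the range $1,2,\ldots,d-3$, while excess $d-2$ is
attained. My plan is to establish the gap first and then exhibit a polytope
realising $d-2$.

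The core of the argument is the following local observation. Let $v$ be a
nonsimple vertex, so $\deg v\ge d+1$. The vertex figure $P/v$ is a
$(d-1)$-polytope whose vertices correspond to the edges of $P$ through $v$;
since $v$ is nonsimple, $P/v$ has at least $d+1$ vertices and hence is not a
simplex. The key point is that being nonsimple cannot be an isolated
phenomenon: I want to show that a single nonsimple vertex forces a certain
minimum amount of additional excess, either at $v$ itself or at nearby
vertices. Concretely, I would argue via \cref{lem:basic-excess}. If $v$ is
nonsimple and lies in a facet $F$ in which $v$ is simple, then by
\cref{lem:basic-excess}(i) there is a facet $J\ni v$ with $F\cap J$ not a
ridge, and then \cref{lem:basic-excess}(ii) forces \emph{every} vertex of the
lower-dimensional intersection $F\cap J$ to be nonsimple. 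Since $F\cap J$ has
dimension at most $d-3$ but, being a nonempty face containing $v$, still
contains many vertices, this manufactures a whole family of nonsimple
vertices. Counting the excess contributed by all of them should already push
the total up to roughly $d-2$.

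The cleanest way to organise the count is to consider a nonsimple vertex $v$
of \emph{minimum} degree and analyse the intersection pattern of the facets
through $v$. Let me isolate the two regimes. Either $v$ is nonsimple in every
facet containing it, in which case each such facet already contributes excess
and one sums the contributions around $v$; or $v$ is simple in some facet $F$,
and the mechanism of the previous paragraph produces a nonsimple face $F\cap
J$ of codimension $\ge 2$. In the latter case every vertex of $F\cap J$ has
degree at least $d+1$, and $F\cap J$ has at least $d-2$ vertices (a proper
face of dimension at most $d-3$ that is not a ridge must still have enough
vertices), so the accumulated excess is at least $d-2$. The arithmetic should
show that in either regime the total excess, once nonzero, is at least $d-2$,
which gives the gap.

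The hard part, I expect, is the combinatorial bookkeeping that turns ``$v$ is
nonsimple'' into a clean lower bound of exactly $d-2$ rather than some weaker
bound: one must be careful that the nonsimple vertices being counted are
distinct and that no excess is double-counted, and one must verify that the
minimal nonsimple configuration really does bottom out at $d-2$ and not lower.
I would handle this by a careful face-counting argument on the facets through
$v$, possibly using the fact that the $2$-face or ridge structure forced by
\cref{lem:basic-excess}(ii) pins down the number of new nonsimple vertices.
Finally, to see $d-2$ is achieved (so the gap is sharp and the second value is
genuinely $d-2$ rather than something larger), I would exhibit an explicit
example: a suitable polytope such as a pyramid over a prism, or the pentasm
described later in the excerpt, whose excess can be computed directly from
$\xi(P)=2f_1-df_0$ to equal $d-2$. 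This realisation step is routine once the
right example is identified.
```
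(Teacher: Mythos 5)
First, a point of comparison: the paper does not prove this statement at all --- it is imported verbatim from \cite{PinUgoYos16a} (Thm.~3.3 there), where its proof is a lengthy, multi-lemma induction. So your proposal must be judged on its own merits, and as it stands it has a genuine gap. The fatal step is the parenthetical claim that ``a proper face of dimension at most $d-3$ that is not a ridge must still have enough vertices'', specifically at least $d-2$ of them. This is false: a face of dimension $m$ need only have $m+1$ vertices, and a non-ridge intersection $F\cap J$ of two facets can be a single vertex or an edge (two facets of the $4$-dimensional cross-polytope can meet in exactly one vertex; the paper itself allows this possibility in the proof of \cref{lem:excess-d-1-full-story}). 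Since \cref{lem:basic-excess}(ii) only gives you excess $\ge 1$ per vertex of $F\cap J$, a low-dimensional intersection then yields total excess $\ge 1$ or $2$, nowhere near $d-2$. The correct mechanism --- which is what the cited proof actually uses, and which is visible in miniature in the paper's proof of \cref{lem:excess-d-1-full-story} (``if this intersection is either a vertex or an edge, then every vertex in their intersection has degree at least seven'') --- is a trade-off: the \emph{smaller} $\dim(F\cap J)$ is, the \emph{larger} the excess of each of its vertices (roughly, excess at least $d-2-\dim(F\cap J)$ per vertex), so that the product $\bigl(\dim(F\cap J)+1\bigr)\bigl(d-2-\dim(F\cap J)\bigr)$ is at least $d-2$ at both extremes. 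Your sketch does not have this stronger lemma, and it cannot be extracted from \cref{lem:basic-excess} alone, so the count as you set it up cannot reach $d-2$.

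The second regime of your case split --- a nonsimple vertex that is nonsimple in \emph{every} facet containing it --- is not handled at all beyond one sentence; note that the natural inductive attempt (a nonsimple facet $F$ has $\xi(F)\ge d-3$ by induction, and $\xi(P)\ge\xi(F)$ since each vertex has an edge leaving $F$) lands at $d-3$, one short of the required bound, and closing that gap is precisely where the hard work lies in \cite{PinUgoYos16a}. The realisation half of your plan is fine: a pyramid over a simplicial $(d-1)$-prism (apex of degree $2d-2$, all other vertices simple) has excess exactly $d-2$, and this computation is routine. But the gap half, which is the entire content of the theorem, does not go through as proposed.
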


In this section we show that, like simple polytopes (those with excess zero) \cite{BliMan87}, all polytopes with small excess are reconstructible from their graphs. It is known that a polytope with dimension at most three is reconstructible from its graph. And there are pairs of $d$-polytopes with excess $d$ and isomorphic $(d-3)$-skeleta: a bipyramid over a $(d-1)$-simplex and a pyramid over a bipyramid over a $(d-2)$-simplex. So by virtue of the excess theorem, we concentrate on polytopes with excess $d-2$ and $d-1$, for $d\ge 4$.

Our capstone result, \cref{thm:ExcessRec}, asserts that any $d$-polytope with excess less than $d$ is reconstructible from its graph. Before delving into its proof, we  recall some definitions and results from \cite{PinUgoYos16a,NevPinUgo17,Jos00}.

\begin{lemma}[Structure of $d$-polytopes with excess $d-2$, {\cite[Lem.~4.8, Thm.~4.10]{PinUgoYos16a}}]\label{lem:excess-d-2-full-story} Every $d$-polytope $P$  with excess exactly $d-2$ has either

\begin{enumerate}[(i)]
\item a unique nonsimple vertex; or
\item  exactly $d-2$ nonsimple vertices, each of degree $d+1$ in $P$, which form a simplex $(d-3)$-face $K$.
\item In the latter case, every facet in $P$ intersecting $K$, but not containing it, misses exactly one vertex of $K$ and every vertex of $K$ in the facet has degree $d$.
\end{enumerate}
\end{lemma}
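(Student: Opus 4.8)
The plan is to combine a crude count on the excess with \cref{lem:basic-excess} to force all nonsimple vertices into one small simplex face. Since every vertex of a $d$-polytope has degree at least $d$, the excess $\xi(P)=\sum_u(\deg u-d)$ is a sum of nonnegative integers, strictly positive exactly at the nonsimple vertices. As $\xi(P)=d-2$, there are at most $d-2$ nonsimple vertices, and if there is only one we are already in case (i). So from now on I assume $P$ has at least two nonsimple vertices and aim to land in case (ii).

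The engine is \cref{lem:basic-excess}. First I would produce a facet $F$ together with a vertex $v$ that is \emph{simple in $F$ but nonsimple in $P$}. Passing to the vertex figure $P/v$ (a $(d-1)$-polytope whose number of vertices equals $\deg_P v=d+\xi(v)$), the facets of $P$ through $v$ correspond to the facets of $P/v$, and $v$ is simple in a facet $F$ precisely when the corresponding facet of $P/v$ is a simplex. Thus such a pair $(F,v)$ exists as soon as some nonsimple vertex has a vertex figure possessing a simplex facet. Because the excess is so small, each $\xi(v)$ is tiny and $P/v$ has only a handful more than $d-1$ vertices, so these vertex figures are simple enough to be analysed directly; this is the first delicate point. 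Granted $(F,v)$, \cref{lem:basic-excess}(i) supplies a facet $J$ with $F\cap J$ not a ridge, and then \cref{lem:basic-excess}(ii) shows that \emph{every} vertex of $K:=F\cap J$ is nonsimple in $P$.

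Now the budget does the heavy lifting. Being a proper non-ridge intersection of two distinct facets, $K$ has dimension at most $d-3$, hence at least $\dim K+1$ vertices, all of them nonsimple; combined with the bound of $d-2$ nonsimple vertices this gives $\dim K\le d-3$. If $\dim K=d-3$, then $K$ already contains at least $d-2$ nonsimple vertices, which exhausts the entire supply: $K$ must be a simplex with exactly $d-2$ vertices, these are \emph{all} the nonsimple vertices of $P$, and since their excesses are positive integers summing to $d-2$ each excess is exactly $1$, i.e. each has degree $d+1$. So the crux is to show that the nonsimple vertices really sit in a non-ridge face of the top possible dimension $d-3$, rather than in a smaller, ``deeper'' cluster. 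This is exactly where the sharpness of the excess theorem (\cref{thm:excess}) must be used: a strictly lower-dimensional cluster would either concentrate the excess into too few vertices or force extra nonsimple vertices elsewhere, and the minimality of the value $d-2$ as the smallest nonzero excess is what excludes these configurations. I expect this exclusion to be the main obstacle.

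Finally, for (iii), take $K$ to be the simplex $(d-3)$-face from (ii), so each of its $d-2$ vertices is adjacent to the other $d-3$ vertices inside $K$ and therefore has exactly four neighbours outside $K$. Let $G$ be a facet meeting $K$ but not containing it, so $G\cap K$ is a proper face of the simplex $K$. Using Balinski connectivity together with this edge budget at the vertices of $K$, I would argue that $G$ cannot omit two or more vertices of $K$ without either disconnecting the graph or overspending the available edges, so $G$ misses exactly one vertex $w$ and $G\cap K$ is the opposite facet of $K$; a short local count then shows that each retained vertex of $K$ keeps all four of its outside neighbours inside $G$, giving it degree $(d-4)+4=d$ in $G$. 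This last part is fiddly but routine once the global structure in (ii) is in hand.
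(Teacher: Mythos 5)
First, a point of comparison: the paper itself does not prove this lemma at all --- it is imported wholesale from \cite[Lem.~4.8, Thm.~4.10]{PinUgoYos16a}, where it is the culmination of a long analysis. So your attempt has to stand on its own, and as written it does not: the two steps you yourself flag as ``delicate'' and ``the main obstacle'' are genuine gaps, not deferred technicalities, and the tools available in this paper (\cref{lem:basic-excess} and \cref{thm:excess}) are not strong enough to fill them.

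The first gap is the existence of the starting pair $(F,v)$ with $v$ nonsimple in $P$ but simple in $F$. Your reduction is correct ($v$ is simple in $F$ exactly when $F/v$ is a simplex facet of the vertex figure $P/v$), but the justification ``each $\xi(v)$ is tiny, so the vertex figures can be analysed directly'' is not an argument. With at least two nonsimple vertices, an individual excess can be as large as $d-3$, so $P/v$ can be a $(d-1)$-polytope with up to $2d-3=2(d-1)-1$ vertices, and in that range a polytope need not have any simplex facet: the $2$-fold pyramid over a product of two triangles is a $6$-polytope with $11<2\cdot 6$ vertices whose facets are pyramids over the product (10 vertices) and $2$-fold pyramids over triangular prisms (8 vertices). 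So vertex count alone cannot deliver the pair $(F,v)$; genuine structural work is needed precisely here.

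The second gap is the exclusion of $\dim K<d-3$. \cref{thm:excess} says only that the smallest positive excess of a \emph{polytope} is $d-2$; it places no constraint on the dimension of a face carrying the nonsimple vertices, and you give no mechanism by which a lower-dimensional ``cluster'' would contradict it. What actually closes this case is a quantitative refinement of \cref{lem:basic-excess}(ii), proved in \cite{PinUgoYos16a} but not stated in this paper: every vertex of a $k$-dimensional non-ridge intersection $F\cap J$ has excess at least $d-2-k$. Granting that, $K$ has at least $k+1$ vertices each of excess at least $d-2-k$, so $(k+1)(d-2-k)\le d-2$, which simplifies to $k(d-3-k)\le 0$ and forces $k=d-3$ or $k=0$; the case $k=0$ is a single vertex of excess $d-2$, i.e.\ case (i). Your budget argument works only when $K$ already has dimension $d-3$, which is what had to be proved. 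The same missing ingredient sinks your sketch of (iii): Balinski's theorem plus an edge count cannot exclude that a vertex $u\in K\cap G$ is simple in $G$ with degree $d-1$ (one outside neighbour absent). The correct argument applies \cref{lem:basic-excess}(i)--(ii) to such a $u$ to produce a non-ridge intersection $G\cap J'$ whose vertices are nonsimple, hence lie in $K\cap G$; the quantitative bound then forces those vertices to have excess at least $2$ (at least $3$ if $G$ missed two vertices of $K$), contradicting that every excess equals $1$. In short, the outline is reasonable, but both pillars it rests on are absent from your proof and from this paper.
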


\begin{lemma}[Structure of $d$-polytopes with excess $d-1$, {\cite[Thm.~4.18]{PinUgoYos16a}}]\label{lem:excess-d-1-full-story} Let $P$ be $d$-polytope with  excess degree $d-1$, where  $d>3$. Then $d=5$ and either

\begin{enumerate}[(i)]
\item there is a single vertex with degree nine; or

\item there are two vertices with degree seven; or

\item there are four vertices each with degree six, which form a quadrilateral 2-face $Q$ which is the intersection of two facets.
Furthermore, every facet in $P$ intersecting $Q$ but not containing it intersects $Q$ at an edge, and every vertex of $Q$ in such a facet has degree five.
\end{enumerate}
\end{lemma}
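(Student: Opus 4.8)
The plan is to dispose of the divisibility obstruction first and then to localise the excess. Writing $\xi(P)=2f_1-df_0$ gives the congruence $\xi(P)\equiv df_0\pmod 2$, so $\xi(P)=d-1$ forces $d(f_0+1)$ to be odd; hence $d$ must be \emph{odd} and $f_0$ must be \emph{even}. In particular no $4$-polytope has excess $3$, which settles $d=4$ immediately, and it remains to rule out odd $d\ge 7$ and, for $d=5$, to produce exactly the three configurations. Two crude bounds come for free and I would use them throughout: since each nonsimple vertex has excess at least $1$, there are at most $d-1$ nonsimple vertices; and by the excess theorem \cref{thm:excess} applied to each facet $F$ (a $(d-1)$-polytope), one has $\xi(F)\in\{0\}\cup[d-3,\infty)$.

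The core step is to confine the nonsimple vertices. Starting from a nonsimple vertex and a suitable incident facet $F$, \cref{lem:basic-excess}(i) produces a facet $J$ for which $F\cap J$ is \emph{not} a ridge, and \cref{lem:basic-excess}(ii) then forces every vertex of the face $F\cap J$ to be nonsimple. As there are at most $d-1$ nonsimple vertices altogether, $F\cap J$ is a face of dimension at most $d-3$ carrying at most $d-1$ vertices; when $\dim(F\cap J)=d-3$ this is a $(d-3)$-polytope with at most $(d-3)+2$ vertices, a very restricted regime (cf.~\cite[Sec.~6.1]{Gru03} and \cref{lem:simplePolytopes}). Using Balinski's theorem \cite{Bal61} to prevent the nonsimple locus from fragmenting, exactly as in the proofs of \cref{thm:pyramid} and \cref{prop:d+d}, I expect to reduce to a short list of shapes for the all-nonsimple face, mirroring the dichotomy already recorded for excess $d-2$ in \cref{lem:excess-d-2-full-story}.

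The dimension is then pinned down by a counting comparison organised around the partition of $d-1$ into the excesses of the nonsimple vertices. When the excess is spread over many vertices (all of excess $1$), the clustering above applies: a simplex $(d-3)$-face already accounts for excess $d-2$, so the single extra unit needed to reach $d-1$ can plausibly be absorbed only by replacing the simplex by a polygon with one more vertex, i.e. by taking the nonsimple face to be a quadrilateral ($\dim=2$, four vertices), which is consistent solely with $d-3=2$ and hence $d=5$; this is case (iii), and the adjacency and degree refinements there should fall out of re-examining the facet $J$ and noting that every facet meeting but not containing the quadrilateral cuts it in an edge. The few-vertex distributions are handled instead through the vertex figure $P/v$ of a nonsimple vertex, a $(d-1)$-polytope with $d+\xi(v)$ vertices, to which the excess theorem and the few-vertex classifications (\cref{cor:d+k}, \cref{prop:d+d}) can be applied; this is where cases (i) and (ii) are isolated and where the intermediate distributions of $4$, namely $3+1$ and $2+1+1$, must be excluded by showing they would generate further, uncounted excess.

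The main obstacle is precisely this dimension-forcing step combined with the exclusion of the mixed distributions. Parity and the clustering argument only \emph{localise} the excess; turning ``the nonsimple face is a quadrilateral'' into ``$d=5$'', and ruling out the $3+1$ and $2+1+1$ partitions, requires the delicate facet-counting bookkeeping underlying \cref{lem:excess-d-2-full-story}, and it seems hard to avoid an exhaustive (possibly \texttt{polymake}-assisted, in the spirit of the computations used elsewhere in this paper) verification of the finitely many surviving configurations in dimension five.
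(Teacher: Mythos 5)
Your proposal inverts the structure of what actually needs to be proved here, and in both directions it falls short. The paper does not reprove the classification at all: items (i), (ii) and the first sentence of (iii) are imported wholesale from \cite[Thm.~4.18]{PinUgoYos16a}, and the proof given in the paper is devoted solely to the ``Furthermore'' sentence of (iii). You spend nearly all your effort on the classification ($d=5$ plus the trichotomy), and you concede at the end that its decisive steps --- ruling out odd $d\ge 7$, excluding the partitions $3+1$ and $2+1+1$ of the excess, and converting ``the nonsimple locus is a quadrilateral'' into ``$d=5$'' --- are not carried out but would require an ``exhaustive (possibly \texttt{polymake}-assisted) verification''. The parity observation ($\xi(P)\equiv df_0 \pmod 2$ forces $d$ odd and $f_0$ even, killing $d=4$) is correct and pleasant, but everything after it is a plan rather than an argument, so as a proof of the classification your text has an admitted, unexecuted core. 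The efficient course is the paper's: cite \cite[Thm.~4.18]{PinUgoYos16a} for this part rather than attempt to rederive a long classification from the few lemmas available in this paper.

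Conversely, the one part that genuinely requires proof in this paper --- that in case (iii) every facet meeting $Q$ but not containing it meets it in an \emph{edge}, with the degree-five statement --- is waved through in your proposal with ``should fall out of re-examining the facet $J$''. It does not fall out of \cref{lem:basic-excess}(ii) alone: that lemma only says vertices in a non-ridge intersection of two facets are nonsimple, i.e.\ of degree $\ge 6$, and in case (iii) the vertices of $Q$ \emph{do} have degree six, so nothing you have quoted excludes a facet meeting $Q$ in a single vertex. The paper's argument needs the stronger quantitative fact (from \cite{PinUgoYos16a}) that when two facets of the $5$-polytope meet in a vertex or an edge, every vertex of the intersection has degree at least seven --- impossible in case (iii) --- whence any two facets meet in a ridge, in $Q$, or not at all. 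Granting that, one fixes $u\in Q$ with neighbours $v,w$ in $Q$ and argues: a facet containing $u,v,w$ meets $Q$ in a face of $Q$ containing three of its vertices, hence contains $Q$; while a facet $F$ containing $u$ and missing $v$ meets each of the two facets through $Q$ in a ridge, and therefore contains every neighbour of $u$ except $v$ --- in particular it contains $w$, so $F\cap Q$ is exactly the edge $uw$, and $u$ has degree five in $F$. None of this counting appears in your proposal, so the statement you were asked to prove remains unproved at both ends.
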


\begin{proof} Items (i), (ii), and the first sentence of (iii) are restatements of \cite[Thm.~4.18]{PinUgoYos16a}. Here we prove the second part of (iii).

Recall from \cref{lem:basic-excess}(ii) that if two facets of $P$ do not intersect in a ridge, then every vertex in their intersection is nonsimple; and if this intersection is either a vertex or an edge,  then every vertex in their intersection has degree at least seven. So in case (iii), every pair of facets intersects in either a ridge, $Q$, or the empty set.

Let $F_{1}$ and $F_{2}$ be two facets whose intersection is $Q$. Fix  a vertex $u\in Q$. Let $F$ be a facet containing $u$ but missing some neighbour $v$ of $u$ in $Q$. The intersection of $F$ and $F_{i}$ must be a ridge for each $i$. Thus, all the neighbours of $u$ in $P$ except $v$ must be in $F$: there are two such facets.

Furthermore, any facet containing  $u$ and its two neighbours in $Q$ must contain $Q$. This completes the proof of the lemma.
\end{proof}

Our methodology to establish the reconstruction of polytopes with small excess relies on a result of  Joswig \cite{Jos00}, which in turn builds on Kalai's idea to prove the reconstructibility of simple polytopes; see  \cite{Kal88}.

Define a {\it $k$-frame} as a subgraph of $G(P)$ isomorphic to the star $K_{1,k}$, where the vertex of degree $k$ is called the {\it root} of the frame. If the root of a frame is a simple vertex, we say that the frame is {\it simple}.  We say that a $k$-frame with root $x$ is {\it valid} if there is a facet containing $x$ and all the edges of the frame. If $x$ is a simple vertex, each of its $(d-1)$-frames is valid.

\begin{lemma}[{ \cite[Thm.~2.3]{Jos00}}]\label{lem:Joswig-valid-frames}
A polytope can be reconstructed from its graph if the valid frames of each vertex are known.
\end{lemma}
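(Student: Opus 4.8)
The plan is to follow Kalai's strategy for reconstructing simple polytopes \cite{Kal88}, in the form refined by Joswig so as to accommodate nonsimple vertices. We are given the abstract graph $G(P)$, the dimension $d$, and, for each vertex, the list of its valid frames; the goal is to recover the vertex set of every facet, since (as observed in the introduction) the vertex--facet incidences determine the whole face lattice. The underlying geometric objects are the acyclic orientations of $G(P)$ induced by generic linear functionals: such a functional $\varphi$ orients each edge towards its larger endpoint, and then every nonempty face $F$ has a unique $\varphi$-maximal vertex, i.e.\ a unique sink in the orientation restricted to $F$. Call an acyclic orientation with this property an \emph{abstract objective function} (AOF). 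The whole difficulty is to single out the AOFs, and then to read off the facets, using only the combinatorial frame data.

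The heart of the argument is a purely combinatorial characterisation of the AOFs by a minimisation principle. For each acyclic orientation $\mathcal{O}$ and each vertex $v$, I would count the faces having $v$ as their sink: if $F$ is such a face, then all edges of $F$ at $v$ are oriented into $v$ and form a star, that is, a valid frame. Conversely, the valid in-frames at $v$ are exactly the local data needed to count these faces, and this is precisely where the frame hypothesis enters: at a simple vertex the count is Kalai's $2^{\deg^-(v)}$, but at a nonsimple vertex not every subset of incident edges spans a face, and the valid frames supply the correct multiplicities. Setting $f^{\mathcal{O}}=\sum_v h^{\mathcal{O}}(v)$, where $h^{\mathcal{O}}(v)$ is this frame-based local count, one shows that $f^{\mathcal{O}}$ always bounds the total number of faces from above, with equality exactly when $\mathcal{O}$ is an AOF. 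Since the total number of faces does not depend on the orientation, the AOFs are identified as the minimisers of $f^{\mathcal{O}}$, and these minimisers are computable from the graph and the valid frames alone. Proving this inequality together with its equality case, while correctly handling the overcounting produced by nonsimple vertices, is the main obstacle: one must verify that a valid in-frame at $v$ genuinely extends to a face with sink $v$, and that distinct faces are counted at distinct sinks.

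Once an AOF $\mathcal{O}$ has been extracted, I would recover each facet by a reverse search from its sink. A facet $F$ has a unique sink $s$, and the edges of $F$ at $s$ constitute a valid frame contained in $F$; starting from this frame one grows $F$ by following edges downward against $\mathcal{O}$, at each newly reached vertex $w$ selecting the valid frame compatible with the edges of $F$ already found at $w$. The unique-sink property guarantees that this process stays inside $F$ and terminates with exactly $V(F)$; equivalently, the facets are the maximal proper \emph{initial sets} of $\mathcal{O}$ whose induced subgraph is connected and frame-closed. Ranging over the sinks (or, as in Kalai's treatment, reversing $\mathcal{O}$) produces all facets. With every $V(F)$ in hand, the vertex--facet incidence graph is known, and hence, by the reduction recalled in the introduction, the entire combinatorial structure of $P$ is reconstructed. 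The two places where the proof genuinely requires the \emph{validity} of frames, rather than mere vertex degrees, are the local count in the minimisation step and the frame-guided reverse search; both would fail at nonsimple vertices if only the bare graph were available.
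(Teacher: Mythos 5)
The paper itself contains no proof of this lemma: it is quoted as a black box from Joswig \cite{Jos00}, so your attempt has to stand on its own as a reconstruction of Joswig's argument. Your overall architecture is indeed his (Kalai's minimisation scheme \cite{Kal88}, adapted so that frames replace the count $2^{\deg^-(v)}$, followed by facet recovery from a good orientation), but both load-bearing steps are left as acknowledged ``obstacles'', and the specific claims you make toward closing them are false or ill-defined. Start with the local count $h^{\mathcal{O}}(v)$: you propose to count valid in-frames at $v$, and to verify that ``a valid in-frame at $v$ genuinely extends to a face with sink $v$''. That statement is false. Let $v$ be a vertex of a $4$-polytope whose vertex figure is a square pyramid, with edges $f_1,f_2,f_3,f_4$ (cyclically, over the square) and $f_5$ (over the apex). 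Then $\{f_1,f_3\}$ is a valid frame, being contained in the facet whose star at $v$ is $\{f_1,f_2,f_3,f_4\}$; yet if $f_1,f_3$ are directed into $v$ and $f_2$ is not, no face of $P$ both contains this frame and has sink $v$. In the other direction, many distinct valid in-frames lie inside one and the same face, since every subset of a facet's star is valid. So valid in-frames neither inject into nor surject onto the faces with sink $v$, and your $f^{\mathcal{O}}$ is not the quantity that satisfies Kalai's inequality. The missing idea is that the valid frames determine the \emph{face stars}: the maximal valid frames at $v$ are exactly the stars of the facets through $v$ (distinct facets have incomparable vertex sets, applied in the vertex figure $P/v$), and every face through $v$ is the intersection of the facets containing it, so the face stars at $v$ are precisely the intersections of maximal valid frames. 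Only with $h^{\mathcal{O}}(v)$ defined as the number of face stars contained in the in-edges of $v$ does the inequality $\sum_v h^{\mathcal{O}}(v)\ge$ (number of nonempty faces), with equality exactly at the good orientations, go through.

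The facet-recovery step has the same defect. The reverse search is not well defined: at a newly reached vertex $w$ there are in general many valid frames ``compatible with the edges of $F$ already found'' (again, every subset of a facet star is valid), and nothing singles out the correct one or shows the process terminates with exactly $V(F)$. Your fallback characterisation -- ``the facets are the maximal proper initial sets of $\mathcal{O}$ whose induced subgraph is connected and frame-closed'' -- misquantifies: for a \emph{fixed} good orientation $\mathcal{O}$, most facets of $P$ are not initial sets of $\mathcal{O}$ at all, so one must range over \emph{all} good orientations, as Kalai and Joswig do; and among initial sets the issue is not maximality but \emph{soundness}, i.e.\ that such a set cannot properly contain a facet. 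Soundness is the genuinely hard direction and it again needs the face-star machinery: if $\mathrm{In}(s)$ is a maximal face star, $F$ the corresponding facet, and $R$ the set of vertices with a directed path to $s$, then a vertex $u\in R\setminus F$ whose path enters $F$ by an edge $uw$ would force the star of $w$ inside $R$ to be a face star strictly containing the star of the facet $F$ at $w$, which is impossible because facet stars are the maximal face stars. No argument of this kind appears in your proposal. In short: you have correctly identified Joswig's strategy, but the two places where validity of frames actually does the work -- the corrected local count and the soundness of facet recovery -- are exactly the places left unproved, and the patches you do commit to would fail.
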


Call an acyclic orientation  of the graph $G(P)$ of a polytope $P$ {\it good } if for every nonempty face $F$ of $P$ the graph $G(F)$ of $F$ has a unique sink.  (A {\it sink} as usual means a vertex with no directed edges going out.) As in \cite[Sec.~4]{NevPinUgo17}, we only need that the acyclic orientation has a unique sink in every facet, so for us this possibly larger set represents the good orientations. An acyclic orientation of $G(P)$  induces a partial ordering of the vertices  of $G(P)$.

Define an {\it initial} set of a graph $G(P)$ with respect to some orientation as a set such that no edge is directed from a vertex not in the set to a vertex in the set. Similarly, a {\it final} set with
respect to some orientation is a set such that no edge is directed from a vertex in the
set to a vertex not in the set. A {\it  source} is a vertex with no directed edges coming into it.

The paper \cite{NevPinUgo17} established the existence of good orientations with some special properties, but first we need an important remark, also from \cite{NevPinUgo17}.

\begin{remark}[{\cite[Rem.~4.2]{NevPinUgo17}}]
\label{rmk:orientation-composition} Let $P$ be a $d$-polytope, let $F$ be a face of $P$ and let $O$ be a good orientation of $G(P)$ in which $V(F)$ is initial. Further, denote by $O|_F$ the good orientation of $G(F)$ induced by $O$.  If $O_{F}'$ is a good orientation of $G(F)$ other than $O|_F$, then the orientation $O'$ of $G(P)$ obtained from $O$ by redirecting  the edges of $G(F)$ according to $O_{F}'$ is  also a good orientation.
\end{remark}

\begin{lemma}[{\cite[Lem.~4.3]{NevPinUgo17}}] \label{lem:Orientation-F-Initial} Let $P$ be a polytope. For every two disjoint faces $F_{i}$  and $F_{j} $ of $P$,  there is a good orientation of $G(P)$ such that
\begin{enumerate}[(i)]
\item  the vertices in $F_{i}$ are initial,
\item  the vertices in $F_{j}$ are final, and
\item  within the face $F_{i}$, any two vertices (if they exist) can be chosen to be the (local) sink and the (global) source.
\end{enumerate}
\end{lemma}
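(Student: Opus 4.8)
The backbone of the construction is the standard fact that a generic linear functional $\varphi$ induces a good orientation of $G(P)$: orienting each edge towards its larger $\varphi$-value gives an acyclic orientation in which every face has a unique maximiser, which is its unique sink. Together with the composition principle of \cref{rmk:orientation-composition}, this is enough once (i) and (ii) are available, so I would argue by induction on $d=\dim P$ and treat (iii) as a consequence of (i)+(ii). Given the two vertices $s,t\in F_i$ that are to become the global source and the local sink, apply (i)+(ii) inside the lower-dimensional polytope $F_i$ to its disjoint $0$-faces $\{s\}$ and $\{t\}$ to obtain a good orientation of $G(F_i)$ with $s$ its only source and $t$ its only sink; then take a good orientation of $G(P)$ furnished by (i)+(ii) for $F_i,F_j$ (so $V(F_i)$ is initial and $V(F_j)$ is final) and splice in this orientation of $F_i$ using \cref{rmk:orientation-composition}, which is legitimate precisely because $V(F_i)$ is initial. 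The spliced orientation is still good, keeps $F_j$ final, and now has the prescribed source and sink inside $F_i$.

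It therefore remains to prove (i)+(ii): for disjoint faces $F_i,F_j$ there is a good orientation with $V(F_i)$ initial and $V(F_j)$ final. Making $F_i$ initial by itself is easy, since a generic functional from the relative interior of the normal cone of $F_i$ realises $F_i$ as the minimum face, hence initial, and places the unique source in $F_i$. The real issue is to secure $F_j$ final at the same time, and this \emph{cannot} be achieved by a single functional. Requiring $F_i$ initial and $F_j$ final means orienting the edges leaving $F_i$ outward and those leaving $F_j$ inward, and the two open cones of admissible functionals can fail to meet: on a regular hexagon with $F_i,F_j$ two non-adjacent vertices, the directions minimised at $F_i$ and those maximised at $F_j$ form two $60^\circ$ cones that meet only along a boundary ray, so no functional does both; yet the acyclic orientation consisting of the two directed paths $1\to 2\to 3$ and $1\to 6\to 5\to 4\to 3$ is good and has $1$ initial and $3$ final. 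Hence a genuine abstract objective function, not a linear functional, is needed.

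To build such an orientation I would separate and then repair. Since $F_i$ and $F_j$ are disjoint compact convex sets, choose a hyperplane strictly separating them and a generic $\varphi$ with level sets parallel to it, so that every edge joining the $F_i$-side to the $F_j$-side is oriented from the former to the latter; perturb so that $F_i$ is the minimum face of the lower part, forcing $F_i$ initial. On the $F_j$-side one then corrects the orientation so that $F_j$ becomes final, overwriting the orientation on a suitable face there (via \cref{rmk:orientation-composition}) to send the relevant boundary edges into $F_j$, without ever creating an edge into $F_i$. \textbf{The main obstacle is exactly this correction step}: the composition principle is available only for \emph{initial} faces, whereas $F_j$ lies in the \emph{final} part, so one cannot simply reverse the orientation and re-apply it, because the reverse of a good orientation need not remain good after a single splice. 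Making the repair rigorous — choosing the separating hyperplane and the overwritten face so that the unique-sink property survives on every facet while $F_i$ stays initial and $F_j$ is forced final — is where the work lies; once that localisation is set up correctly, the genericity arguments and the reduction of (iii) are routine bookkeeping.
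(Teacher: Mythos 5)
Your preliminary observations are sound: generic linear functionals do induce good orientations, your reduction of (iii) to (i)+(ii) by splicing an orientation of $G(F_i)$ into one where $V(F_i)$ is initial (via \cref{rmk:orientation-composition}) is essentially the right move, and your hexagon computation is correct — for two vertices at distance two on a regular hexagon the relevant normal cones meet only in a ray, and one can check that even lexicographic tie-breaking cannot rescue a single linear functional there. But the proof then stops at the heart of the lemma: you never establish (i)+(ii). Your ``separate and repair'' plan fails exactly where you say it does, and it cannot be fixed with the tools you allow yourself, because \cref{rmk:orientation-composition} only licenses overwriting the orientation on an \emph{initial} face, while $F_j$ must be made final, and the reverse of a good orientation need not be good. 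Conceding that ``making the repair rigorous is where the work lies'' is conceding that the statement is unproved; note also that this paper itself does not prove the lemma but imports it from \cite{NevPinUgo17}.

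The missing idea is projective, not a combinatorial repair, and your conclusion that ``a genuine abstract objective function, not a linear functional, is needed'' is the wrong lesson to draw. Choose affine functionals $f_i,f_j$ that are nonnegative on $P$ and vanish exactly on $F_i$ and on $F_j$ respectively. Disjointness gives $f_i+f_j>0$ on $P$, so $\varphi:=f_i/(f_i+f_j)$ is well defined on $P$; it is $0$ exactly on $F_i$, $1$ exactly on $F_j$, strictly between $0$ and $1$ elsewhere, and each set $\{x\in P:\varphi(x)\le c\}$ is the intersection of $P$ with a halfspace, since $\varphi(x)\le c$ is the affine inequality $(1-c)f_i(x)-cf_j(x)\le 0$. (Equivalently: the supporting hyperplanes of $F_i$ and $F_j$ meet in a codimension-$2$ flat disjoint from $P$, the pencil of hyperplanes through that flat contains one missing $P$, and sending it to infinity by an admissible projective transformation — which changes neither the graph nor the face lattice — makes the two supporting hyperplanes parallel, so that a genuine linear functional works after all; this is how your hexagon example gets resolved.) Orient each edge by increasing $\varphi$, breaking ties with a generic linear functional $\psi$: the result is acyclic, has a unique sink in every face (on a face $F$ the $\varphi$-maximal set is a face $F'$, every vertex of $F\setminus F'$ has an out-neighbour by the usual improvement argument applied to the affine functional above, and $\psi$ gives a unique sink inside $F'$), makes $V(F_i)$ initial and $V(F_j)$ final, and has its unique source at the $\psi$-minimal vertex of $F_i$. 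With this construction your splicing step does complete (iii), but you should make one caveat explicit: being initial does not make $s$ \emph{the} global source, so uniqueness of the source must be carried through the recursion (it holds for the orientation just described, applied to $F_i$ with the disjoint faces $\{s\}$ and $\{t\}$, because the splice leaves all edges not inside $F_i$ untouched and every vertex outside $F_i$ already had an in-edge).
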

The following corollary follows from \cref{lem:Orientation-F-Initial}.

\begin{corollary}\label{cor:Orientation-F-Initial} Let $F$ be a facet of a polytope $P$. Then
\begin{enumerate}[(i)]
\item For any two vertices $u,v\in F$, there exists a good orientation $O$ of $G(P)$ such that $u$ is the source of $O$, the vertices of $F$ are an initial set, and $v$ is the sink of $O|_F$.

\item For any face $R$ in $F$, there is a good orientation $O$ of $G(P)$ such that the vertices of $R$ are an initial set in $O|_F$, and some vertex  in $F\setminus R$ is the sink  in $O|_F$.
\end{enumerate}
\end{corollary}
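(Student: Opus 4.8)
The plan is to obtain both statements as direct specialisations of \cref{lem:Orientation-F-Initial}, reading the facet $F$ and its subfaces as the relevant pairs of disjoint faces. The only facts I will need beyond the lemma are that a subface of the facet $F$ is again a face of $P$, and that in a good orientation a global source (respectively sink) lying inside $F$ is automatically the source of $O$ (respectively the sink of the induced orientation $O|_F$), because $F$ is itself a face and so $O|_F$ has a unique sink.

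For part (i), I would apply \cref{lem:Orientation-F-Initial} to the pair $F_i=F$ and $F_j=\emptyset$ (the empty face is disjoint from $F$, and the finality clause is then vacuous). This produces a good orientation $O$ of $G(P)$ in which $V(F)$ is initial; invoking clause (iii) of the lemma, I choose, among the vertices of $F$, the vertex $u$ to be the global source and $v$ to be the local sink. Since $F$ is initial and $O$ is acyclic, $u$ is then the source of $O$, while $v$, being the chosen local sink of $F$, is the sink of $O|_F$. (Here $u\ne v$, as is implicit in the statement.)

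For part (ii), let $w$ be any vertex of $F\setminus R$; this set is nonempty because $R$ is a proper face of $F$. I would apply \cref{lem:Orientation-F-Initial} to $P$ with $F_i=R$ and $F_j=\{w\}$, which are disjoint faces of $P$. The resulting good orientation $O$ has $R$ initial and $w$ as its unique global sink. Restricting to $F$: initiality of $R$ in $G(P)$ forces $R$ to be initial in $O|_F$, and since $w\in F$ is a global sink it is in particular a local sink of the face $F$, hence the unique sink of $O|_F$. Thus $R$ is initial in $O|_F$ and $w\in F\setminus R$ is the sink of $O|_F$, as required.

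Since every step is a direct invocation of the lemma, there is no serious obstacle; the one point needing care is the passage from ``global'' to ``induced'' data, namely checking that a global source inside an initial facet is the source of $O$, and that a global sink inside $F$ is the sink of $O|_F$. This is exactly where the goodness hypothesis (unique sink in each face) is used. One could alternatively prove part (ii) by first taking any good orientation with $V(F)$ initial, then constructing the desired orientation of $G(F)$ by applying the lemma to the polytope $F$ with $F_i=R$, $F_j=\{w\}$, and finally splicing the two together via \cref{rmk:orientation-composition}; the direct route above avoids this composition.
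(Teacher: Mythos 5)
Part (i) of your proposal coincides with the paper's proof: apply \cref{lem:Orientation-F-Initial} with $F_i=F$ (disregarding $F_j$) and use clause (iii) to designate $u$ as the global source and $v$ as the sink of $O|_F$.

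Part (ii) is where you genuinely diverge, and the divergence matters. Your single application of the lemma with $F_i=R$, $F_j=\{w\}$ does prove the statement as literally written: initiality of $R$ in $G(P)$ gives initiality in $O|_F$, and a global sink $w\in F$ is the unique sink of $O|_F$ by goodness. The paper instead uses the route you relegate to a closing remark: first apply the lemma with $F_i=F$ to get $V(F)$ initial in a good orientation $O$, then apply it again \emph{inside the polytope} $F$ (with $R$ as $F_i$ and a vertex $u\in F\setminus R$ as $F_j$) to get an orientation $O'_F$ of $G(F)$ with $R$ initial and $u$ the sink, and finally splice via \cref{rmk:orientation-composition}. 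The payoff of that extra work is an orientation with the \emph{additional} property that $V(F)$ is initial in $G(P)$, and this is precisely how the corollary is invoked later: in Claim~1 of the proof of \cref{thm:ExcessRec}, part (ii) is cited as producing a good orientation in which the vertices of the facet $F_R$ are initial (so that the orientation belongs to $\mathcal{A}_R$), the vertices of $R$ are initial within $F_R$, and a simple vertex is the sink of the facet. Your orientation provably cannot have this property: $w$ is a global sink of $O$, and every vertex of a facet has at least one neighbour outside that facet, so some edge is directed from $P\setminus F$ into $w\in F$, destroying initiality of $V(F)$. In short, your argument is a correct proof of the corollary as stated, but it proves something strictly weaker than what the paper's proof delivers and what the paper subsequently uses; to serve its role in \cref{thm:ExcessRec}, part (ii) needs the two-step splicing construction (or the statement needs to be read as including initiality of $V(F)$, which your direct route cannot supply).
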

 \begin{proof} Let $F$ be a facet of a polytope. For the proof of (i), apply \cref{lem:Orientation-F-Initial} to $F_{i}=F$ (and disregard $F_{j}$).

For the proof of (ii), we apply \cref{lem:Orientation-F-Initial}  twice. First apply it to $F=F_{i}$, disregarding $F_{j}$,  to obtain that $V(F)$ is initial with respect to some good orientation $O$. Secondly, apply it to the polytope $F$ in $\aff F$ (disregarding $P$), where the face $R$ plays the role of $F_{i}$ and a vertex $u$ of $F$ not in $R$ plays the role of $F_{j}$; in this way, we obtain that, within $F$, the vertex set of $R$ is initial  and $u$ is a sink with respect to some good orientation $O'_{F}$ of $G(F)$. From \cref{rmk:orientation-composition} it then follows that the orientation $O'$ obtained from $O$  by directing the edges of $G(F)$ according to $O_{F}'$
is the desired good orientation.
\end{proof}

A {\it feasible}  subgraph is any induced $(d-1)$-connected subgraph $H$ of $G$ in which  the simple vertices of $P$ in  $H$ each have degree $d-1$ in $H$. In this case, each nonsimple vertex of $P$ in $H$ has degree $\ge d-1$ in $H$.

\begin{lemma}[{\cite[Lem.~4.4]{NevPinUgo17}}] \label{lem:feasible-subgraphs}  Let $P$ be a $d$-polytope, and let $H$ be a feasible subgraph of $G(P)$ containing at most $d-2$ nonsimple vertices. If the graph $G(F)$ of some facet $F$ is contained in $H$, then $H= G(F)$.
\end{lemma}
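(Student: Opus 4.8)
The plan is to show that $H$ can contain no vertex outside $F$ by producing a vertex cut of $H$ that is too small, contradicting the $(d-1)$-connectivity built into feasibility. First I would record two elementary facts. Since $G(F)$ is exactly the subgraph of $G(P)$ induced on $V(F)$, and $H$ is an induced subgraph with $V(F)\subseteq V(H)$, the subgraph of $H$ induced on $V(F)$ coincides with $G(F)$; in particular every $H$-edge joining two vertices of $F$ already lies in $G(F)$. Secondly, I would invoke the fact that the vertex figure at a simple vertex is a $(d-1)$-simplex: if $v$ is a simple vertex of $P$ lying in the facet $F$, then $F$ contains exactly $d-1$ of the $d$ edges of $P$ at $v$, so $\deg_{G(F)}(v)=d-1$.

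The crucial observation, which I expect to be the heart of the argument, combines these facts with feasibility. Let $v$ be any simple vertex of $P$ contained in $F$. By feasibility $\deg_H(v)=d-1$, while $v$ already has $d-1$ neighbours inside $F$ by the preceding paragraph; hence all neighbours of $v$ in $H$ lie in $F$. Thus no simple vertex of $F$ has an $H$-neighbour outside $F$: such vertices are ``trapped'' inside $F$. Writing $N$ for the set of nonsimple vertices of $P$ contained in $F$, it follows that every path in $H$ from a simple vertex of $F$ to a vertex of $V(H)\setminus V(F)$ must pass through $N$, so that $N$ separates $V(F)\setminus N$ from $V(H)\setminus V(F)$ in $H$.

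Finally I would estimate the sizes involved. Since $G(F)\subseteq H$, every vertex of $N$ is a nonsimple vertex lying in $H$, whence $|N|\le d-2$. On the other hand $F$, being a $(d-1)$-polytope, has at least $d$ vertices, at most $d-2$ of which are nonsimple, so $V(F)\setminus N\neq\emptyset$. Suppose, for contradiction, that $H\neq G(F)$; then $V(H)\setminus V(F)\neq\emptyset$ as well, and the separation observation shows that deleting the at most $d-2$ vertices of $N$ disconnects $H$. This contradicts the $(d-1)$-connectivity of $H$ (which is part of the definition of a feasible subgraph, and is precisely the form of connectivity guaranteed for facet graphs by Balinski's theorem \cite{Bal61}). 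Therefore $V(H)=V(F)$, and since $H$ is induced, $H=G(F)$. The only place the hypothesis is used is in the estimate $|N|\le d-2<d-1$; this is exactly what makes the cut too small, and it is why the bound ``at most $d-2$ nonsimple vertices'' is the right one.
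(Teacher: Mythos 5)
Your proof is correct. The paper does not reprove this lemma locally --- it imports it from \cite{NevPinUgo17} --- and your argument (a simple vertex $v\in F$ has $\deg_H(v)=d-1=\deg_{G(F)}(v)$, so all its $H$-neighbours are trapped in $F$, whence the at most $d-2$ nonsimple vertices of $H$ would form a vertex cut separating $V(F)\setminus N$ from $V(H)\setminus V(F)$, contradicting the $(d-1)$-connectivity required of a feasible subgraph) is essentially the same counting-plus-connectivity argument as in the cited source.
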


We are now ready to state and prove the main result of the section.

\begin{theorem}\label{thm:ExcessRec} Let $P$ be a $d$-polytope with excess at most $d-1$. Then the graph of $P$ determines the entire combinatorial structure of $P$.

This result is best possible in the sense that there are $d$-polytopes with excess $d$ which are not reconstructible from their graphs.
\end{theorem}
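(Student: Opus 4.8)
The plan is to use the excess theorem (\cref{thm:excess}) to split the argument by the value of the excess and then lean on the two structure lemmas together with the frame/orientation machinery recalled above. First I would dispose of the routine cases: a polytope of dimension at most three is reconstructible from its graph, and a simple polytope (excess $0$) is reconstructible by \cite{BliMan87,Kal88}. By \cref{thm:excess} the only remaining excesses below $d$ are $d-2$ and $d-1$, so I may assume $d\ge 4$ and consult the structure lemmas. In \cref{lem:excess-d-2-full-story}(i) there is a unique nonsimple vertex, while in \cref{lem:excess-d-1-full-story}(i)--(ii) there are one or two nonsimple vertices; all of these are settled at once by \cref{thm:fourpointfive}. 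Thus only two genuinely new configurations survive: excess $d-2$ with exactly $d-2$ nonsimple vertices forming a simplex $(d-3)$-face $K$ (\cref{lem:excess-d-2-full-story}(ii)), and excess $d-1$, which forces $d=5$, with four degree-six nonsimple vertices forming a quadrilateral $2$-face $Q=F_1\cap F_2$ (\cref{lem:excess-d-1-full-story}(iii)).

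For both surviving cases the strategy is identical. By Joswig's criterion (\cref{lem:Joswig-valid-frames}) it suffices to determine the valid frames at every vertex, and at a simple vertex every $(d-1)$-frame is automatically valid, so the entire difficulty is concentrated at the handful of nonsimple vertices. To find the valid frames there I would reconstruct the facets passing through them, certifying that a given feasible subgraph genuinely is a facet graph by producing, via \cref{cor:Orientation-F-Initial}, a good orientation in which its vertex set is initial and then invoking the maximality statement \cref{lem:feasible-subgraphs}. The structure lemmas are exactly what make this feasible: they record precisely how each facet meets the nonsimple face (missing one vertex of $K$, respectively meeting $Q$ in an edge) and control the degrees of the nonsimple vertices lying inside such facets.

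The excess $d-2$ case is comparatively easy, because $P$ has only $d-2$ nonsimple vertices \emph{in total}; hence every induced subgraph, and in particular every facet graph, carries at most $d-2$ nonsimple vertices, so \cref{lem:feasible-subgraphs} applies without restriction and certifies each facet. The hard case, which I expect to be the main obstacle, is \cref{lem:excess-d-1-full-story}(iii): the two facets $F_1,F_2$ whose intersection is $Q$ both contain all four nonsimple vertices, so their graphs carry four nonsimple vertices, exceeding $d-2=3$, and \cref{lem:feasible-subgraphs} can no longer pin them down. The remedy I would pursue is to first recover the quadrilateral $Q$ from the graph alone (its four degree-six vertices, with the constraint that any other facet meets $Q$ only in an edge and thus contains at most two nonsimple vertices, bringing those facets back within range of \cref{lem:feasible-subgraphs}), to treat $Q$ as a known $2$-face equal to $F_1\cap F_2$, and then to use a good orientation in which $V(Q)$ is initial (\cref{cor:Orientation-F-Initial}(ii)) to peel off $F_1$ and $F_2$. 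Separating these two facets from one another, and from spurious feasible subgraphs running through all four nonsimple vertices, is the delicate point, and it is here that the degree and intersection data of \cref{lem:excess-d-1-full-story}(iii) must be exploited in full.

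Finally, the optimality claim requires no new work: the bipyramid over a $(d-1)$-simplex and the pyramid over a bipyramid over a $(d-2)$-simplex, already recalled in the text, both have excess exactly $d$ and share a $(d-3)$-skeleton, hence are not reconstructible from their common graph.
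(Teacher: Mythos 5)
Your overall architecture is the same as the paper's: use \cref{thm:excess} and the two structure lemmas to reduce to two configurations, dispose of the subcases with one or two nonsimple vertices via \cref{thm:fourpointfive}, then determine valid frames at the nonsimple vertices (\cref{lem:Joswig-valid-frames}), reading the facets that meet but do not contain $K$ (resp.\ $Q$) off the structure lemmas, and recognising the facets that contain $K$ (resp.\ $Q$) through good orientations with initial vertex sets. But there is a genuine gap already in your certification step: it is circular as written. Whether an orientation is \emph{good} is defined through the facets of $P$, which are exactly what is being reconstructed, so you cannot ``produce, via \cref{cor:Orientation-F-Initial}, a good orientation in which its vertex set is initial'' by querying the graph; that corollary only asserts the existence of such orientations for faces you already know. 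The paper supplies the missing engine, a Kalai--Joswig counting device: over the graph-checkable class $\mathcal{A}_R$ of acyclic orientations (those in which some feasible subgraph containing the graph of the nonsimple face $R$ is initial, with $G(R)$ initial inside it), it defines $f_R^O = h_{d-1}^O + d\,h_d^O$, where $h_k^O$ counts simple vertices of indegree $k$; it shows that under such orientations every facet has only simple sinks, so $f_R^O \ge f_{d-1}$ with equality exactly when every facet has a unique sink, and hence the minimisers of $f_R^O$ over $\mathcal{A}_R$ are precisely the good orientations. Without this (or some substitute making ``good'' recognisable from the graph), neither of your two cases can actually be carried out.

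The second gap is the one you yourself flag: the case of excess $d-1$, $d=5$, with all four nonsimple vertices in the quadrilateral $Q$, is left unproven at its crux. Your observation that \cref{lem:feasible-subgraphs} does not apply to feasible subgraphs containing all four nonsimple vertices (four exceeds $d-2=3$) is correct and astute --- the paper itself passes over this point with a ``mutatis mutandis'' --- but your proposed remedy ends by conceding that separating $F_1$ and $F_2$ from spurious feasible subgraphs ``is the delicate point,'' which is precisely what a proof must settle. The point is in fact closable inside the orientation framework, with uniqueness of the sink standing in for \cref{lem:feasible-subgraphs}: if $H \supseteq G(Q)$ is feasible, initial with respect to a good orientation $O$ in which $G(Q)$ is initial inside $H$, and has a unique simple sink $x$, let $F$ be the facet spanned by the four edges of $H$ at $x$; goodness gives $V(F)\subseteq V(H)$ via directed paths to the unique sink of $F$, and if $V(H)\setminus V(F)$ were nonempty, a sink $y$ of $O$ restricted to that set (which exists by acyclicity) yields a contradiction: all outgoing $H$-edges of $y$ must end in $Q$, since simple vertices of $F$ keep all their $H$-edges inside $F$; this is impossible for $y\notin Q$ because $G(Q)$ is initial in $H$ (so $y$ would be a second sink of $H$), and impossible for $y\in Q\setminus F$ because feasibility forces $y$ to have at least two $H$-neighbours outside $Q$ and those edges are directed out of $Q$, hence out of $y$. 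Some argument of this kind is required; as submitted, your proposal establishes the excess-$(d-2)$ case only modulo the circularity above, and does not establish the excess-$(d-1)$ case at all.
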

\begin{proof} Let $d\ge 4$. By \cref{lem:Joswig-valid-frames} graph reconstruction follows from determining the valid frames of each vertex.  As a result, it suffices to determine the valid frames of nonsimple vertices.

We first consider the case of $\xi=d-2$. In view of \cref{lem:excess-d-2-full-story}, a $d$-polytope with excess $d-2$ has either a unique nonsimple vertex or has $d-2$ vertices of excess degree one, which form a $(d-3)$-simplex $R$. The reconstruction of the former case follows from \cref{thm:fourpointfive}. Hence we only deal with the latter case.

The facets containing a nonsimple vertex in $R$ (and in $P$) fall into two classes: those touching but not containing $R$ and those containing $R$. By \cref{lem:excess-d-2-full-story}(iii), for each nonsimple vertex $u$ in $P$ the facet containing $u$ and missing some vertex $v$ in $R$ is given by the $d$-frame rooted at $u$ which misses $v$: for each such vertex $u$ there are exactly $d-3$ such facets.

We now deal with the facets containing $R$; here more work is required to get the valid frames of a nonsimple vertex.

Denote by $\mathcal{H}_R$ the set of feasible subgraphs which contain the complete graph $G(R)$ on the vertices of $R$, and by  $\mathcal{A}_R$  the set of all acyclic orientations of $G(P)$ in which for some subgraph $H_R$ in $\mathcal{H}_R$, (1) $H_R$  is initial and (2)  $H_R$ contains $G(R)$ as an initial subgraph. Any such $H_R$ has a sink which is a simple vertex. Observe that there is a facet $F_{R}$ in $P$ which contains $R$. The graph of $F_{R}$ is in $\mathcal{H}_R$.

\begin{claim} \label{cl:1}A feasible subgraph $H_R$  of $G(P)$  is the graph of a facet containing $R$ if and only if (1) $H_R$ contains $G(R)$,  (2) $H_R$ is initial with respect to a good orientation $O$ in $\mathcal{A}_R$, and (3) $H_R$ has a unique sink which is a simple vertex.
\end{claim}
\begin{proof}

We reason as in the proof of Claim 1 of \cite[Thm.~4.8]{NevPinUgo17}.

First consider a facet $F_R$ containing $R$. \cref{cor:Orientation-F-Initial}(ii) ensures the existence of a good orientation of $G(P)$ in which the vertices of $F_R$ are initial, that the vertices of $R$ are initial within $F_R$, and that a  simple vertex  is a sink in the facet.  This proves the ``only if'' part of the claim.

 Let $O\in \mathcal{A}_R$ and let $h_k^O$ denote the number of simple vertices of $G$ with indegree $k$. Define\[f^O_{R}:=h^O_{d-1}+dh_d^O.\]
The function $f^O_{R}$ counts the number of pairs $(F,w)$, where $F$ is a facet of $P$ and $w$ is a simple sink in $F$ of the orientation $O$ in $\mathcal{A}_R$. Since the orientation is acyclic, every facet has a sink.

Let $H_R$ be a feasible subgraph in $\mathcal{H}_R$, and let $x$ be the simple sink in $H_R$ with respect to $O$. Suppose $H_R$ does not represent the facet $F_R$ containing $x$ and the $d-1$ edges in $H_{R}$ incident to $x$. Then, in view of \cref{lem:feasible-subgraphs}, there are vertices of  $F_R$ outside $H_R$. Since $H_R$ is initial with respect to $O$, the facet $F_R$ would contain two sinks, one of them being $x$.

Consequently, given that there is a good orientation in  $ \mathcal{A}_R$ and a subgraph $H_R$ representing a facet, we have that \[\min_{O\in \mathcal{A}_R} f^O_{R}=f_{d-1},\]
where $f_{d-1}$ denotes the number of facets in $P$. Observe that all the nonsimple vertices of $P$ are in $R$, and thus, in $H_{R}$. Also, an orientation of $\mathcal{A}_R$ minimising $f_{R}^O$ must be a good orientation.

 Let $x$ be the simple sink in $H_R$ with respect to $O$, then $x$ defines a unique facet $F_R$ of $P$. Therefore, all the other vertices of $F_R$ are smaller than $x$ with respect to the ordering induced by $O$. Since $H_R$ is an initial set in $O$ and since there is directed path in $G(F_R)$ from any other vertex of $G(F_R)$ to $x$,  we must have $V(F_R) \subseteq V(H_R)$ and we are home by \cref{lem:feasible-subgraphs}.
\end{proof}
	Thanks to \cref{cl:1}, running through all the good orientations in  $\mathcal{A}_R$, we can recognise all the graphs of facets containing $R$; say that its set is $\mathcal{F}_{R}$. Consequently, for each nonsimple vertex in $R$ we also have the valid frames in each of these facets.

We now know all the valid frames of each nonsimple vertex in $P$, and of course, of each simple vertex. Thus, the case now follows from \cref{lem:Joswig-valid-frames}.

For the case of $\xi=d-1$, thanks to \cref{lem:excess-d-1-full-story} and \cref{thm:fourpointfive}, we can assume that the polytope has dimension five and the four vertices of degree six are contained in a quadrilateral 2-face $R$.

The facets containing a nonsimple vertex in $R$ (and in $P$) fall into two classes: those intersecting but not containing $R$ and those containing $R$. By virtue of \cref{lem:excess-d-1-full-story}(iii), for each nonsimple vertex $u$ in $P$ the facet containing $u$ and missing some vertex $v$ in $R$ is given by the $5$-frame rooted at $u$ which misses $v$: for each such vertex $u$ there are exactly two such facets.

To recognise the facets containing $R$ we proceed mutatis mutandis as in the case of $\xi=d-2$, just replacing the $(d-3)$-simplex with the 2-face. In this way, we recognise all the valid frames of each nonsimple vertex in $P$. Thus, the case again follows from \cref{lem:Joswig-valid-frames}.

A bipyramid over a $(d-1)$-simplex and a pyramid over a bipyramid over a $(d-2)$-simplex give a pair  of nonreconstructible $d$-polytopes with excess $d$, and exactly $d$ vertices of degree $d+1$, for $d\ge 4$. Hence the theorem is tight, as claimed.\end{proof}

\section{Acknowledgments}
Guillermo Pineda would like to thank Michael Joswig for the hospitality at the Technical University of Berlin and for suggesting looking at the reconstruction problem for polytopes with small number of vertices.

%---REFERENCES---

\end{document}